\documentclass[11pt]{amsart}

\usepackage[T1]{fontenc}
\usepackage{amsmath,amssymb,amsthm}
\usepackage[margin=1.1in]{geometry}
\usepackage{xcolor}
\usepackage{hyperref}

\hypersetup{
  colorlinks=true,
  linkcolor=blue,
  citecolor=blue,
  urlcolor=blue
}

\newcommand{\N}{\mathbb{N}}
\newcommand{\Z}{\mathbb{Z}}

\newcommand{\F}{\mathbb{F}}
\newcommand{\cA}{\mathcal{A}}
\newcommand{\cT}{\mathcal{T}}
\newcommand{\boldz}{\mathbf{z}}
\DeclareMathOperator{\Jac}{Jac}

\theoremstyle{plain}
\newtheorem{thm}{Theorem}[section]
\newtheorem{lem}[thm]{Lemma}
\newtheorem{prop}[thm]{Proposition}

\newtheorem{introthm}{Theorem}

\theoremstyle{definition}
\newtheorem{defn}[thm]{Definition}
\newtheorem{notation}[thm]{Notation}

\theoremstyle{remark}

\numberwithin{equation}{section}

\title[Arithmetic complexity of factorial multiples]
{Square-root logarithmic bounds for the arithmetic complexity of factorial multiples}

\author{Liang Zhang}
\email{tensor@pku.edu.cn}
\thanks{Corresponding author: Liang Zhang.}

\subjclass[2020]{Primary 11Y55; Secondary 11N25, 11G20, 68Q15}
\keywords{arithmetic complexity, factorial multiples, smooth numbers, short intervals, genus-two Jacobians}
\date{}

\begin{document}

\begin{abstract}
Assume that the Riemann zeta function has no zeros in
$\Re(s)>1-\delta$, $s\neq1$, for some fixed $0<\delta\leq1/2$.
We prove that, for every sufficiently large integer $k$, a positive
multiple $m_k k!$ has arithmetic complexity
$\tau(m_k k!)\leq\exp(C\sqrt{\log k\log\log k})$, where $C$ depends
only on $\delta$. We first establish a smooth-number asymptotic at every
sufficiently large center $x$ in intervals of length at least $x^{3/4}$.
Combining this estimate with an established genus-two construction yields
the arithmetic bound without Cheng's conjecture on smooth numbers in
elliptic Hasse intervals. The bound includes the cost of constructing all
integer parameters. This work was carried out with assistance from the
UniMath model, developed by UniDT (Shanghai) Co., Ltd., and OpenAI's
GPT-6.1 Sol and GPT-6 Astra.
\end{abstract}

\maketitle

\section{Introduction}\label{sec:introduction}

The purpose of this paper is to bound the arithmetic complexity of a
suitable positive multiple of each factorial. The main result,
Theorem~\ref{thm:main}, obtains the scale
$\exp(O_\delta(\sqrt{\log k\log\log k}))$ from a fixed zero-free
half-plane for the Riemann zeta function, without assuming Cheng's
smooth-number conjecture for elliptic Hasse intervals.
Here $\tau(n)$ measures the number of additions, subtractions and
multiplications needed to construct $n$ from $1$ and $2$, and $\mu(k)$
denotes the least such cost among positive multiples of $k!$.
The precise circuit model and both quantities are specified in
Definition~\ref{defn:complexity}.

The arithmetic cost of integers was studied by de Melo and Svaiter
\cite{deMeloSvaiter1996}, who obtained elementary universal bounds and
lower bounds valid for almost all integers. Moreira \cite{Moreira1997}
sharpened the latter to the scale $\log n/\log\log n$ and established a
matching asymptotic upper scale for all sufficiently large integers.
These general estimates leave substantial room for special sequences:
repeated squaring produces some very large integers with very short
programs, and an almost-all lower bound does not determine the complexity
of factorials or of their easiest multiples. Shub and Smale
\cite{ShubSmale1995} showed that failure of a polynomial bound in $\log k$
for ultimate factorial complexity would imply
$P_{\mathbb C}\neq NP_{\mathbb C}$. Upper bounds for $\mu(k)$ thus form
part of a broader study of explicit integers and algebraic complexity.

Further connections arise from the arithmetic complexity of explicit
integer sequences. Koiran \cite{Koiran2004} relates the construction of
integers such as $\lfloor2^n\log2\rfloor$ to Valiant's theory of
arithmetic circuits. In particular, a polynomial-size constant-free
circuit family for the permanent would give a polynomial bound in
$\log n$ for the cost of this sequence. With the additional assumption
$P=PSPACE$, his results give such a bound for $\tau(n!)$ as well.
Thus lower bounds for particular integer sequences can have consequences
for polynomial circuit complexity, beyond the almost-all estimates for
integers of a given size.

There is also a computational line of work on concrete upper bounds.
Markstr\"om \cite{Markstrom2015} uses exhaustive searches for short
straight-line programs to determine exact values and bounds for small
factorials, primorials, and their ultimate complexities.
These calculations complement asymptotic constructions by exhibiting
short programs for individual integers. On the asymptotic side,
successive multiplication gives $\mu(k)\leq\tau(k!)=O(k)$, while
Strassen's factorial algorithm gives the bound
$\tau(k!)=O(\sqrt{k}(\log k)^2)$; see
\cite[Section~2]{Markstrom2015}. The possibility of choosing a multiple
of $k!$ permits the much smaller bounds discussed next.

Cheng \cite[Conjecture~1 and Theorem~1]{Cheng2004} obtained
\[
 \mu(k)\leq\exp\!\left(O(\sqrt{\log k\log\log k})\right)
\]
under a conjecture about smooth numbers in the elliptic Hasse intervals
$[p+1-2\sqrt p,p+1+2\sqrt p]$.
Writing $L_p[a]=\exp(a\sqrt{\log p\log\log p})$, that conjecture asks,
for every fixed $a>0$, for $\sqrt p\,L_p[-1/(2a)+o(1)]$ many
$L_p[a]$-smooth integers in such an interval.
His construction uses elliptic-curve division polynomials and also gives
a randomized subexponential-time procedure for producing the programs.

The interval length in this conjecture is essential. Younis
\cite[Theorem~1.3]{Younis2024}, assuming RH, proves smooth-number
asymptotics for $x^\theta\leq h\leq x$ and
$(\log x)^{K(\theta)}\leq y\leq2x$, for each fixed $\theta>1/2$.
Cheng's smoothness scale exceeds this lower bound on $y$, but his
intervals have length $4\sqrt p$ at centers comparable to $p$.
For every fixed $\theta>1/2$, $4\sqrt p/p^\theta\to0$.
Consequently, Younis's RH theorem does not establish Cheng's conjecture:
the endpoint interval scale remains outside its stated range.

A different algebraic construction changes the required interval scale.
Zhang \cite[Theorem~1.2]{Zhang2026} obtained the unconditional bound
\[
 \mu(k)\leq
 \exp\!\left(O((\log k)^{2/3}(\log\log k)^{1/3})\right)
\]
using genus-two Jacobians, a family with a non-torsion marked divisor
class, and integer circuits encoding nonzero arithmetic certificates.
The order-counting theorem of Lenstra, Pila and Pomerance
\cite[Theorem~1.1]{LPP02} supplies the relevant genus-two orders.
The intervals used in this construction are
\[
 I_p=\left[p^2-\frac12p^{3/2},\,p^2+\frac12p^{3/2}\right].
\]
Their centers are of size $p^2$ and their lengths are $p^{3/2}$,
corresponding to the exponent $3/4$ rather than $1/2$.

The main contribution here is to recover Cheng's square-root logarithmic
upper-bound scale by combining this genus-two construction with a
short-interval smooth-number estimate under a quasi-Riemann hypothesis.
More precisely, any fixed $\delta>0$ with
$\zeta(s)\neq0$ for $\Re(s)>1-\delta$, $s\neq1$, suffices.
Theorem~\ref{thm:strip} proves the required density estimate at every
sufficiently large center on the $x^{3/4}$ interval scale, with smoothness
cutoff $\exp(\sqrt{\log x\log\log x})$.
The transfer in Section~\ref{sec:arithmetic} then yields the bound for
$\mu(k)$ with all integer gates charged. This removes the need for
Cheng's smooth-number conjecture in obtaining that upper-bound scale;
it does not prove the conjecture itself. The genus-two family and
certificate construction are those of \cite{Zhang2026}, while the
analytic argument adapts Younis's contour method to the fixed-strip
hypothesis.

Our main result is the following.

\begin{introthm}[Positive factorial multiples]\label{thm:main}
Assume that a fixed real number $0<\delta\leq1/2$ satisfies
\[
 \zeta(s)\neq0\qquad\bigl(\Re(s)>1-\delta,\ s\neq1\bigr).
\]
There are constants $C>0$ and $k_0\geq3$, depending only on $\delta$, such that, for every integer $k\geq k_0$, there is a positive integer $m_k$ satisfying
\[
 \tau(m_k k!)
 \leq
 \exp\!\left(C\sqrt{\log k\log\log k}\right).
\]
All logarithms are natural. The constants may depend on $\delta$ but are independent of $k$.
\end{introthm}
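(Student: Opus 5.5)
The plan is to combine two ingredients described in the introduction: a short-interval smooth-number estimate (Theorem~\ref{thm:strip}) and the genus-two construction of \cite{Zhang2026}. The genus-two construction turns smooth group orders into short programs for multiples of $k!$. We first fix the smoothness scale $y=\exp(a\sqrt{\log k\log\log k})$ with a suitable constant $a=a(\delta)$. We then choose a prime range $p\asymp P$ with $\log P\asymp\sqrt{\log k\log\log k}$, tuned so that $y$ matches the cutoff $\exp(\sqrt{\log x\log\log x})$ at $x\asymp P^2$, up to a constant in the exponent. The argument of \cite{Zhang2026} proceeds as follows. For each prime $p\le k$, or for each prime in a suitable batch, one needs a genus-two Jacobian over $\F_p$ whose order is $y$-smooth. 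The corresponding marked divisor class $D$ is then killed by $M=\prod_{\ell\le y}\ell^{e_\ell}$. Consequently the integer certificate, a resultant or norm built from the coordinates of $[M]D$ in the family, is divisible by $p$. A single integer circuit evaluates this certificate simultaneously for all $p$ in a range, because the family is defined over $\Z$ with integer parameters. The product of the certificates over all parameter choices is then a nonzero multiple of $\prod_{p\le k}p^{v_p(k!)}$, after raising to a power, which costs only $O(\log k)$ extra squarings.

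Step one is to verify that Theorem~\ref{thm:strip} applies at every center $x\in[p^2/2,2p^2]$ on intervals of length $\ge x^{3/4}$. Since $|I_p|=p^{3/2}\asymp x^{3/4}$, the theorem gives that a proportion $\rho(u)^{1+o(1)}=L_x[-c/a]$ of the integers in $I_p$ are $y$-smooth. Step two feeds this into the Lenstra--Pila--Pomerance order-counting theorem \cite[Theorem~1.1]{LPP02}, in the form used in \cite{Zhang2026}. That theorem says the number of parameter choices $(f)$ in the family with $\#\Jac(C_f)(\F_p)$ in a given subset $S\subseteq I_p$ is at least $\#S\cdot p^{O(1)}$ times a polynomial loss. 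Hence for each large prime $p$, a fraction at least $L_p[-c'/a]/(\log p)^{O(1)}$ of the integer parameters in a box of size $p^{O(1)}$ yield smooth orders.

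Step three bounds the cost. Computing $[M]D$ by a Cantor-style double-and-add over $\Z$, with denominators cleared into the certificate, takes $O(\log M)=O(\pi(y)\log k)$ gates, which is $\exp(O(\sqrt{\log k\log\log k}))$. We take the product of the certificates over a set $T$ of parameter tuples. This set is chosen randomly, or deterministically by a covering argument over a box, of size $L[c'/a]\cdot(\log k)^{O(1)}$, large enough that every prime $p$ in the relevant ranges has a good tuple. Each tuple is built from $1,2$ at cost $O(\log k)$, so all integer parameters are charged. Primes $p\le P$ that are too small for the asymptotic are handled directly: we multiply in $\prod_{p\le P}p^{\lceil\log k/\log p\rceil}$ at cost $O(P\log k)$. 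This fits the bound because $P=\exp(O(\sqrt{\log k\log\log k}))$. The total cost is $|T|\cdot\exp(O(\sqrt{\log k\log\log k}))$, and balancing $a$ gives the claimed $C$.

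The main obstacle I expect is in the middle range, where primes $p$ are large compared with $P$, up to $k$. There the interval $I_p$ sits at center $p^2$ with $\log p$ as large as $\log k$, and the required smoothness would be $L_p[a]$ rather than the fixed $y$. I would address this as \cite{Zhang2026} does. Only primes $p\le k$ matter, and the certificate for a fixed parameter tuple is divisible by every $p$ for which that tuple has a $y$-smooth order. So we need Theorem~\ref{thm:strip} uniformly for $x$ up to $k^2$ with a fixed $y=L_k[a]$. The smooth proportion is then $\rho(u)$ with $u=\log x/\log y\le\sqrt{\log k/\log\log k}\cdot(2/a)$. This is still $L_k[-O(1/a)]$, provided the strip theorem holds uniformly in this $(x,y)$ range. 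Checking that uniformity, together with the union bound over all $p\le k$ for the covering set $T$, is the step I would scrutinize most carefully.
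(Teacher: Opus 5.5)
\textbf{Gap: the use of Theorem~\ref{thm:strip} with a fixed smoothness bound.} Your architecture is the paper's: smooth integers in $I_p$ from Theorem~\ref{thm:strip}, the marked-slice count of Proposition~\ref{prop:slice}, one common scalar killing every smooth order, the charged certificates of Proposition~\ref{prop:certificate}, a union bound over the primes $p\le k$, and $\lceil\log_2k\rceil$ final squarings. The proof does not close at the step you flag. Theorem~\ref{thm:strip} does not give a proportion $\rho(u)^{1+o(1)}$ of $y$-smooth integers for a fixed $y=L_k[a]$. It gives only $\Psi(x+h,e^{S_x})-\Psi(x,e^{S_x})\ge\frac h4e^{-3S_x}$, where the cutoff $e^{S_x}$, $S_x=\sqrt{\log x\log\log x}$, is tied to the center. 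The uniformity in $(x,y)$ you would need is not proved in the paper, and obtaining it would mean redoing Section~\ref{sec:analytic}.

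It is also unnecessary, because the mismatch you worry about goes in the favourable direction. The missing idea is monotonicity in the smoothness bound. Put $A_k=\log(16k^2)$ and $S_k=\sqrt{A_k\log A_k}$. For $p\le k$ you have $\log x_p\le2\log k<A_k$, so $S_p\le S_k$. Hence every $e^{S_p}$-smooth integer in $I_p$ is $B$-smooth for $B=\lceil e^{S_k}\rceil$, and $I_p$ contains at least $\frac14p^{3/2}e^{-3S_k}$ of them, uniformly in $p$ above a threshold depending only on $\delta$. So you need $y\ge e^{S_k}$, i.e.\ $a$ at least about $\sqrt2$, and there is nothing to balance.

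\textbf{Smaller repairs.} First, apply the theorem at the left endpoint $x_p=p^2-\frac12p^{3/2}$ with $h=p^{3/2}$, so that $h\ge x_p^{3/4}$ holds exactly. Writing ``$\asymp$'' is not enough, since $(2p^2)^{3/4}>p^{3/2}$.

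Second, the Lenstra--Pila--Pomerance count excludes up to $p$ orders. The probability in Proposition~\ref{prop:slice} is proportional to $\max(|\mathcal{T}_p|-p,0)$, not to $|\mathcal{T}_p|$, so you need $|\mathcal{T}_p|\ge2p$. This holds for $p\ge p_0(\delta)$ because $S_p=o(\log p)$.

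Consequently, only primes below the constant $p_0(\delta)$ need the direct patch $p_0!$. Your intermediate range $\log P\asymp\sqrt{\log k\log\log k}$ can be dropped, and its tuning is off anyway: at $x\asymp P^2$ the cutoff $e^{S_x}$ is only $\exp((\log k)^{1/4+o(1)})$.

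Finally, use one grid, e.g.\ $\{0,\dots,10k-1\}^5$, which is nearly uniform modulo every $p\le k$. Then a single union bound selects one sample list for all tested primes, and the certificate charge $C_0(1+\lceil\log_2N\rceil+\lceil\log_2(H+1)\rceil)$ with $\log N\le A_kB$ yields the total $\exp(O(\sqrt{\log k\log\log k}))$.
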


The multiplier is a positive integer existence witness. The circuit computes the multiple $m_k k!$ from $1$ and $2$.

The new ingredient concerns smooth integers at every sufficiently large real center. We use the counting function $\Psi(x,y)$ and saddle point $\alpha(x,y)$ defined in Definition~\ref{defn:smooth}.

\begin{introthm}[Smooth integers under a fixed-strip hypothesis]\label{thm:strip}
Assume that a fixed real number $0<\delta\leq 1/2$ satisfies
\[
 \zeta(s)\neq 0
 \qquad
 \bigl(\Re(s)>1-\delta,\ s\neq 1\bigr).
\]
For $x>e$, put
\[
 L=\log x,\qquad
 S=\sqrt{L\log L},\qquad
 y=e^S,\qquad
 u=\frac{L}{S},
\]
and let $\alpha=\alpha(x,y)>0$ be determined by
\[
 \sum_{\substack{q\leq y\\q\ {\rm prime}}}
 \frac{\log q}{q^\alpha-1}=L.
\]
There are constants $C_\delta$ and $X_\delta$, depending only on $\delta$, such that, for every real $x\geq X_\delta$ and every real $h$ with
$x^{3/4}\leq h\leq x$,
\[
 \left|
 \frac{\Psi(x+h,y)-\Psi(x,y)}
      {\alpha(h/x)\Psi(x,y)}
 -1
 \right|
 \leq \frac{C_\delta}{u}.
\]
After increasing $X_\delta$, the same range satisfies
\[
 \Psi(x+h,y)-\Psi(x,y)
 \geq
 \frac{h}{4}
 \exp\!\left(-3\sqrt{\log x\log\log x}\right).
\]
\end{introthm}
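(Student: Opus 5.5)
The plan is to follow Younis's contour-integral approach, which is in turn modelled on Hildebrand--Tenenbaum's saddle-point method, but with the zero-free half-plane $\Re(s)>1-\delta$ used in place of RH. Write $\zeta(s,y)=\prod_{q\le y}(1-q^{-s})^{-1}$. Perron's formula, in a smoothed form, gives
\[
 \Psi(x+h,y)-\Psi(x,y)=\frac{1}{2\pi i}\int_{\alpha-i\infty}^{\alpha+i\infty}\zeta(s,y)\,\frac{(x+h)^s-x^s}{s}\,ds .
\]
We take the line of integration through the saddle point $\alpha$. In our range $u=L/S=\sqrt{L/\log L}$, and the standard estimate $1-\alpha\sim \log(u\log u)/\log y$ gives $1-\alpha\asymp (\log L)/S\to0$. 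We split the integral at height $T_0=1/\log y$ and at a large height $T$, to be chosen as a power of $x$.

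The first step is the central range $|t|\le T_0$. Here $(x+h)^s-x^s=s x^{s-1}h\,(1+O(h/x))$ up to the usual expansion, so the contribution is $\tfrac{h}{x}$ times the Hildebrand--Tenenbaum main term. This produces $\alpha(h/x)\Psi(x,y)(1+O(1/u))$. For this we quote the uniform saddle-point asymptotic $\Psi(x,y)=\frac{x^\alpha\zeta(\alpha,y)}{\alpha\sqrt{2\pi\phi_2}}(1+O(1/u))$ as a known result. If $h$ is comparable to $x$ the linearization fails, so we handle $x^{3/4}\le h\le x$ by subdividing $[x,x+h]$ into pieces of length $\asymp h/u^2$ and summing, using $\Psi(x',y)/\Psi(x,y)\approx(x'/x)^\alpha$. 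The second step is the intermediate range $T_0\le|t|\le T$. We write $\log\zeta(s,y)-\log\zeta(\alpha,y)$ through sums of $\Lambda(n)n^{-s}$ over $n\le y$. The explicit formula then expresses $\sum_{n\le y}\Lambda(n)n^{-s}$ via $\zeta'/\zeta$ and the zeros $\rho$. Since every zero has $\Re\rho\le 1-\delta$, each zero term is $\ll y^{1-\delta-\alpha}\log^2(|t|+2)$. Because $y^{1-\alpha}\asymp u\log u$, this is $\ll y^{-\delta}(u\log u)\log^2 T$, which is negligible when $\log T\ll L$ and $y^{\delta}=e^{\delta S}$. Hence $|\zeta(s,y)/\zeta(\alpha,y)|$ is governed by the prime-number-theorem model: it is $\exp(-c\,u\,t^2/(1-\alpha)^{-2}\ldots)$ for $|t|\le1/\log y$ moderate, and $\exp(-c\,u)$ beyond that. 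This is exactly Younis's Lemma-type decay, with RH replaced by the $\delta$-strip, and the saving $e^{-cu}$ beats $1/u$. The third step is the tail $|t|>T$. We use the trivial bound $|(x+h)^s-x^s|/|s|\ll x^\alpha/|t|$ together with the truncation error of Perron's formula, which is $\ll x^{\alpha}\zeta(\alpha,y)/T$ plus the count of $y$-smooth integers near the endpoints. Comparing with the main term $\asymp (h/x)x^\alpha\zeta(\alpha,y)/\sqrt{\phi_2}$, we need $T\ge (x/h)u^{A}$. Since $x/h\le x^{1/4}$, the choice $T=x^{1/3}$ works. The endpoint smooth counts in windows of length $x/T$ are controlled by a crude Rankin bound, or by smoothing the Perron kernel.

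The main obstacle is making the intermediate range uniform up to $T=x^{1/3}$. Near-trivial sums over zeros with $|\gamma|\le T$ contribute $\sum_\rho y^{\rho-s}/(\rho-s)$, and we need these bounded by $o(1)$ uniformly. The $\delta$-strip gives the factor $y^{-\delta}=e^{-\delta S}$, and the number of zeros up to height $T$ costs $\log^2 T\ll L^2$. Since $e^{-\delta S}L^2\to0$, the argument closes, but only because $S=\sqrt{L\log L}\gg\log L$. This is why the cutoff $y=e^S$ and the exponent $3/4$ (any $>1/2$ would do given truncation) are compatible with a fixed $\delta$. Finally, the lower bound $\Psi(x+h,y)-\Psi(x,y)\ge\frac h4 e^{-3S}$ follows from the asymptotic: we have $\alpha\ge1/2$, and $\Psi(x,y)=x\rho(u)(1+o(1))$ (de Bruijn/Hildebrand in this range) with $\rho(u)=e^{-(1+o(1))u\log u}$ and $u\log u\le \tfrac12 S(1+o(1))$. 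This gives $\alpha(h/x)\Psi(x,y)\ge h e^{-S}/3\ge \tfrac h4 e^{-3S}$ once $C_\delta/u\le1/4$.
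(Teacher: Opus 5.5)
There is a genuine gap in your intermediate range, and it is not the zero sum you identify as the main obstacle.

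You keep the integral on $\Re(s)=\alpha$ and bound it in absolute value using $|\zeta(\alpha+it,y)|\le\zeta(\alpha,y)e^{-cu}$. For $1/\log y\le|t|\le x/h$ the kernel $|(x+h)^s-x^s|/|s|$ has size about $(h/x)x^\alpha$. So this range alone is bounded only by $x^\alpha\zeta(\alpha,y)e^{-cu}$, and the range $x/h\le|t|\le T$ adds a factor $\log T$. The main term $\alpha(h/x)\Psi(x,y)$, by contrast, has size $(h/x)x^\alpha\zeta(\alpha,y)/\sqrt{LS}$.

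The correct comparison is therefore $e^{-cu}$ against $(h/x)/u$, not against $1/u$. At $h=x^{3/4}$ your bound exceeds the main term by $x^{1/4}e^{-cu}L^{O(1)}=x^{1/4-o(1)}$, because $e^{-cu}=\exp(-c\sqrt{L/\log L})=x^{-o(1)}$. No sharper pointwise bound on that line can help, since
\[
 \frac{|\zeta(\alpha+it,y)|}{\zeta(\alpha,y)}
 \geq\prod_{q\leq y}\frac{1-q^{-\alpha}}{1+q^{-\alpha}}
 \geq\exp\Bigl(-4\sum_{q\leq y}q^{-\alpha}\Bigr)=e^{-O(u)}.
\]
Any absolute-value treatment of $\Re(s)=\alpha$ therefore only reaches $h\geq x\exp(-c'\sqrt{L/\log L})$, far from $x^{3/4}$.

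The missing idea is to move the contour to the left of $\alpha$. There $|x^s|$ gains $x^{-(\alpha-\sigma)}$, which is the only source of a power saving against $x/h\le x^{1/4}$. The paper does this with a local shift. In a zero-free box $\Re(z)>1-\eta$, $|\Im(z)-t|<10y^\eta$, Younis's local estimate for $\sum_{n\le y}\Lambda(n)n^{-s}$ gives $|\zeta'/\zeta(\sigma+it,y)|\le\epsilon L$. This yields $|x^{\sigma+it}\zeta(\sigma+it,y)|\le x^{-(1-\epsilon)(\alpha-\sigma)}x^\alpha\zeta(\alpha,y)$, provided $|t|\gg y^{1-\sigma}/L$. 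That height condition is why the contour leaves $\alpha$ along the curves $1-v\pm iDy^v$.

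The fixed strip alone lets you shift only to $\Re(s)\approx1-(1-\epsilon)\delta$. That saves about $x^{-\delta}$, which beats $x/h$ only when $\delta$ exceeds roughly $1/4$. For small $\delta$ (e.g.\ $1/12$) the paper instead:
\begin{itemize}
\item goes to $\Re(s)=(1+\epsilon)/2$;
\item detours around each zero with $\beta\ge1/2$ and $|\gamma|\le T$, bounding each detour by $\ll x^\alpha\zeta(\alpha,y)x^{-(1-4\epsilon)(1-\beta)}$;
\item sums over zeros using the Guth--Maynard density bound with $T\le x^{3/10}$.
\end{itemize}
The strip hypothesis enters only to guarantee $1-\beta\ge\delta$, so that this sum is $x^{-c\delta}$.

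Your plan contains neither the leftward deformation nor any zero-density input, so it cannot reach $h=x^{3/4}$. For the same reason, your remark that any exponent above $1/2$ would do is unsupported: the admissible exponent is governed by the zero-density exponent, not by truncation.
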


The analytic method underlying Theorem~\ref{thm:strip} follows
\cite[Section~3]{Younis2024}. In particular, the construction of the contour
as the right envelope of a central path, paths associated with zeta zeros,
and a remaining vertical line is adapted from that work.
Sections~\ref{sec:preliminaries} and~\ref{sec:analytic} supply the details
needed to apply this method under the fixed hypothesis
$\zeta(s)\neq0$ for $\Re(s)>1-\delta$, $s\neq1$.
Section~\ref{sec:preliminaries} collects the unconditional estimates used
in the argument, and Section~\ref{sec:analytic} verifies the local shift,
the envelope construction, and the contributions of all contour pieces for
$y=\exp(\sqrt{\log x\log\log x})$ and $x^{3/4}\leq h\leq x$.
These verifications yield thresholds depending on $\delta$ and estimates
uniform in $h$; the underlying contour construction is due to Younis.

The factor $\alpha$ is retained throughout, including at $h=x$.
Recent AI-assisted work addresses the fixed-half-plane hypothesis directly
in the classical setting. OpenAI's preprints \cite{OpenAI2026SevenEighths,OpenAI2026ElevenTwelfths}
report proofs of nonvanishing for $\Re(s)>7/8$ and $\Re(s)>11/12$,
respectively; the latter is a shorter argument written with human assistance.
These conclusions supply the hypothesis of Theorems~\ref{thm:main}
and~\ref{thm:strip} with $\delta=1/8$ and $\delta=1/12$, respectively.
Cureton and Thorner \cite[Theorem~1.1]{CuretonThorner2026v1} report the
corresponding result for Dedekind zeta functions over arbitrary number fields,
with $\delta=1/6000$. For our application, the classical Riemann zeta
function alone is sufficient. We state both theorems with their common
zero-free hypothesis explicit and prove the resulting implications below;
either of the reported classical nonvanishing results then yields an
unconditional specialization.
In that specialization, the deduction of the square-root logarithmic
bound is complete using the cited arithmetic inputs and
Theorem~\ref{thm:strip}; no separate conjecture about smooth numbers in
elliptic Hasse intervals is required.

The local shift uses \cite[Lemma~2.3]{Younis2024}, with the fixed positive strip width making its errors small at the chosen value of $y$. Zero density bounds the sum of the retained zero paths. The other analytic inputs are the unconditional estimates in \cite[(1.3), (1.5), (1.6), (1.9), (1.13), and Lemma~3.6]{Younis2024}.

The arithmetic construction is inherited from \cite{Zhang2026}. Its marked family is given by \cite[(7)--(9) and Lemma~4.1]{Zhang2026}. We use its charged positive certificate \cite[Proposition~5.1]{Zhang2026} and its transfer to the marked slice \cite[Lemma~6.1]{Zhang2026}. Recording rational failures and amplifying prime support are the constructions of \cite[Lemmas~3.2 and~3.1]{Zhang2026}. These are established ingredients of the present proof.

Section~\ref{sec:preliminaries} fixes the analytic notation and records the exact unconditional inputs. Section~\ref{sec:analytic} proves Theorem~\ref{thm:strip}, including the local shift, central curves and horizontal connectors. Section~\ref{sec:arithmetic} gives the density-to-complexity transfer with all gate costs and proves Theorem~\ref{thm:main}.

\section{Preliminaries}\label{sec:preliminaries}

We use the smooth-number notation and saddle-point estimates of \cite{Younis2024}, with natural logarithms and real interval endpoints.

The von Mangoldt function is
\[
 \Lambda(n)=
 \begin{cases}
  \log q,&n=q^j,\quad q\text{ prime},\quad j\geq1,\\
  0,&\text{otherwise}.
 \end{cases}
\]

\begin{defn}[Smooth integers and the saddle point]\label{defn:smooth}
A positive integer is $y$-smooth if all its prime divisors are at most $y$; the integer $1$ is included. For real $x\geq y\geq 2$, define
\[
 \Psi(x,y)=
 \#\{n\in\N\mid n\leq x,\ n\text{ is }y\text{-smooth}\},
 \qquad
 Z_y(s)=\prod_{\substack{q\leq y\\q\ {\rm prime}}}
          (1-q^{-s})^{-1}
 \quad(\Re(s)>0).
\]
Let $\alpha=\alpha(x,y)>0$ be the unique solution of
\[
 F_y(\alpha)=\log x,
 \qquad
 F_y(a)=\sum_{\substack{q\leq y\\q\ {\rm prime}}}
              \frac{\log q}{q^a-1}.
\]
Equivalently, $\alpha$ uniquely minimizes $x^a Z_y(a)$ over $a>0$.
\end{defn}

The function $F_y$ is continuous and strictly decreasing on $(0,\infty)$. Its limits at the two endpoints are $\infty$ and $0$, respectively. This proves existence and uniqueness in Definition~\ref{defn:smooth}. The equivalence follows from
\[
 \frac{d}{da}\log(x^aZ_y(a))=\log x-F_y(a).
\]

For $\Re(s)>0$, each Euler factor is holomorphic and nonzero, since $|q^{-s}|<1$. Differentiating the finite product and expanding a geometric series gives
\begin{equation}\label{eq:logarithmic-derivative}
 -\frac{Z_y'(s)}{Z_y(s)}
 =\sum_{q\leq y}\frac{\log q}{q^s-1}
 =\sum_{q\leq y}\sum_{j\geq1}(\log q)q^{-js}.
\end{equation}
On the other hand,
\[
 \sum_{n\leq y}\Lambda(n)n^{-s}
 =\sum_{\substack{q\ {\rm prime},\ j\geq1\\q^j\leq y}}
   (\log q)q^{-js}.
\]
Consequently,
\begin{equation}\label{eq:prime-power-remainder}
 -\frac{Z_y'(s)}{Z_y(s)}
 =\sum_{n\leq y}\Lambda(n)n^{-s}+R_y(s),
 \qquad
 R_y(s):=\sum_{\substack{q\leq y,\ j\geq2\\q^j>y}}
                 (\log q)q^{-js},
\end{equation}
where $q$ ranges over primes. Only exponents $j\geq2$ occur in the remainder, because $q\leq y$.

\begin{notation}\label{notation:analytic}
We write
\[
 u=\frac{\log x}{\log y},
 \qquad W=x^\alpha Z_y(\alpha).
\]
\end{notation}

\subsection{Unconditional saddle-point inputs}

The following statements are used in their first-version forms. Each is independent of an RH assumption.

\begin{prop}[Saddle-point mass, {\cite[(1.3)]{Younis2024}}]\label{prop:mass}
For real $2\leq y\leq x$,
\[
 W\ll \Psi(x,y)\sqrt{\log x\log y}.
\]
\end{prop}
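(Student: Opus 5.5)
This is the classical Hildebrand--Tenenbaum saddle-point estimate $\Psi(x,y)\asymp W/(\alpha\sqrt{2\pi\sigma_2})$. We only need its lower-bound half, and only up to the factor $\alpha$, so the plan is to prove directly that $\Psi(x,y)\gg W/\sqrt{\sigma_2}$. Here
\[
 \sigma_2=\sigma_2(\alpha)=\sum_{q\leq y}\frac{q^\alpha(\log q)^2}{(q^\alpha-1)^2}
\]
is the second derivative of $\log Z_y$ at $\alpha$. The bound $\sigma_2\ll \log x\log y$ then finishes the argument. That bound holds because each term is at most $\log y\cdot\frac{q^\alpha\log q}{(q^\alpha-1)^2}$, and the ratio $q^\alpha/(q^\alpha-1)$ is harmless once we compare with $F_y(\alpha)=\log x$. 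When $\alpha$ is small (say $q^\alpha-1\asymp\alpha\log q$), one gets $\sigma_2\ll\log y\cdot\log x\cdot\max_q \frac{q^\alpha}{q^\alpha-1}$ only up to a factor $1/(\alpha\log q)$. So the comparison has to be made carefully, using $\sigma_2\asymp u(\log y)^2$ uniformly, which follows from the standard estimates for $\alpha$ in terms of $u$.

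The lower bound comes from a Rankin-type probabilistic argument. Consider the random $y$-smooth integer $N$ with $\Pr(N=n)=n^{-\alpha}/Z_y(\alpha)$. Then $\log N$ has mean $F_y(\alpha)=\log x$ and variance $\sigma_2$. By Chebyshev, with probability at least $1/2$ we have $\log x-\sqrt{2\sigma_2}\leq\log N\leq\log x$ or the symmetric event above $\log x$. To get the side below $x$, use an exponential tilt, or more simply apply the one-sided Cantelli/Paley--Zygmund bound: $\Pr(\log x-c\sqrt{\sigma_2}\leq\log N\leq\log x)\gg1$. This needs some control of the third moment, $\sigma_3\ll\sigma_2\log y$, together with $\sqrt{\sigma_2}\gg\log y$, which holds when $u\geq1$. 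A Berry--Esseen bound for the sum of independent geometric contributions over primes would give the constant.

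On that event $n\leq x$ and $n^{-\alpha}\leq x^{-\alpha}e^{c\alpha\sqrt{\sigma_2}}$. Hence
\[
 \Psi(x,y)\geq\#\{n\}\geq Z_y(\alpha)\Pr(\cdot)\,x^{\alpha}e^{-c\alpha\sqrt{\sigma_2}}\gg W e^{-c\alpha\sqrt{\sigma_2}}.
\]
This loses $e^{-c\alpha\sqrt{\sigma_2}}$ rather than $1/\sqrt{\sigma_2}$. To repair it, take a window of width $1/\alpha$ instead of $\sqrt{\sigma_2}$ and use a local limit theorem. The mass in $[\log x-1/\alpha,\log x]$ is then $\gg(1/\alpha)/\sqrt{\sigma_2}$, which gives $\Psi\gg W/(\alpha\sqrt{\sigma_2})\geq W/\sqrt{\sigma_2}$ when $\alpha\leq1$.

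The main obstacle is this local limit theorem. It requires bounding the characteristic function $\mathbb E\,N^{-i\tau}=Z_y(\alpha+i\tau)/Z_y(\alpha)$ away from $\tau=0$, which is the Hildebrand--Tenenbaum estimate for $|Z_y(\alpha+i\tau)/Z_y(\alpha)|$. I would simply invoke Hildebrand--Tenenbaum's Theorem~1 (valid uniformly for $2\leq y\leq x$) together with their Lemma~4 giving $\sigma_2\asymp(1+\log x/y)\log x\log y\leq 2\log x\log y$. This is exactly the statement cited from \cite[(1.3)]{Younis2024}.
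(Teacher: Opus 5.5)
The paper gives no proof of this proposition. It imports the bound from Younis's (1.3), which is the Hildebrand--Tenenbaum saddle-point estimate, so your final appeal to Hildebrand--Tenenbaum uses the same source.

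\textbf{The gap.} The problem is in your reduction from that estimate to the stated inequality. You discard the factor $\alpha$ (via $W/(\alpha\sqrt{\sigma_2})\geq W/\sqrt{\sigma_2}$). You then use $\sigma_2\asymp(1+\log x/y)\log x\log y\leq2\log x\log y$, i.e.\ $\sigma_2\asymp u(\log y)^2$ ``uniformly''. That inequality needs $y\geq\log x$. The proposition is stated for all $2\leq y\leq x$, and for $y=o(\log x)$ the inequality is false.

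At $y=2$, for instance, $2^\alpha-1=\log2/\log x$. Hence $\alpha\sim1/\log x$, $\sigma_2\sim(\log x)^2$, $W\sim e\log x/\log2$ and $\Psi(x,2)\sim\log x/\log2$. Your chain then gives only $W\ll\Psi(x,2)\log x$. This is a factor $\sqrt{\log x}$ weaker than the claim, although in fact $W\asymp\Psi(x,2)$.

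\textbf{The repair.} Your own observation about the $1/(\alpha\log q)$ loss points to the fix: keep $\alpha$. Given $\Psi(x,y)\asymp W/(\alpha\sqrt{\sigma_2})$, it suffices to show $\alpha^2\sigma_2\ll\log x\log y$ uniformly, and this is elementary. Since $q^\alpha-1\geq\alpha\log q\geq\alpha\log2$,
\[
 \sigma_2\leq\log y\sum_{q\leq y}\frac{\log q}{q^\alpha-1}\Bigl(1+\frac{1}{q^\alpha-1}\Bigr)
 \leq\Bigl(1+\frac{1}{\alpha\log2}\Bigr)\log x\log y .
\]
Hence $\alpha^2\sigma_2\leq(\alpha^2+\alpha/\log2)\log x\log y$. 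Moreover $\alpha<2$, because $F_y(2)\leq\sum_q\log q/(q^2-1)<\log2\leq\log x$. The $\alpha$ you discarded cancels the $1/\alpha$ exactly.

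Your restricted version would suffice for the paper's only use of the proposition, where $y=\exp(\sqrt{\log x\log\log x})>\log x$. It does not, however, establish the proposition in its stated range. The Rankin and local-limit sketch is motivation only. Like the paper, you ultimately rest on the Hildebrand--Tenenbaum bound for $|Z_y(\alpha+i\tau)|/Z_y(\alpha)$, so that part is not a further gap relative to the paper.
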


\begin{prop}[Location of the saddle point, {\cite[(1.5)]{Younis2024}}]\label{prop:location}
For $\log x<y\leq x$ in the sufficiently large range,
\[
 \alpha
 =
 1-\frac{\log(u\log(u+1))+O(1)}{\log y},
\]
with a uniform bounded error.
\end{prop}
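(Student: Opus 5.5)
The plan is to follow the classical Hildebrand--Tenenbaum argument behind \cite[(1.5)]{Younis2024}: approximate $F_y(a)$ for $a$ below $1$ by an explicit integral, solve the resulting model equation, and then use that $F_y$ is strictly decreasing (Definition~\ref{defn:smooth}) to trap $\alpha$ between two test points.

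\emph{Step 1: model equation.} Write $a=1-\xi/\log y$ with $\xi\geq0$. By partial summation against Chebyshev's function $\theta(t)=\sum_{q\leq t}\log q$, using only an unconditional prime number theorem $\theta(t)=t+O(t/(\log t)^2)$, we get
\[
 \sum_{q\leq y}(\log q)q^{-a}=\int_2^y t^{-a}\,dt+E
 =\log y\cdot\frac{e^{\xi}-1}{\xi}+O(1)+E.
\]
The error $E$ is controlled by $\int_2^y t^{-a}(\log t)^{-2}\,dt$. This should be $\ll y^{1-a}/(\log y)^2$ plus a bounded term, which is a small relative error against the main term $\log y\,(e^\xi-1)/\xi$.

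\emph{Step 2: from $q^{-a}$ to $1/(q^a-1)$.} The difference is
\[
 \sum_{q\leq y}\frac{\log q}{q^a(q^a-1)}.
\]
For $a\geq 2/3$ this is $O(1)$. When $a$ is small the relative error must be checked separately. Here we use $y>\log x$: it forces $u\log u\ll y$, hence $\xi\leq\log y-\log\log y+O(1)$, and so $a\gg(\log\log y)/\log y$. In that regime the correction should be at most a constant fraction of the main term, which only shifts $\xi$ by $O(1)$. Altogether,
\[
 \frac{F_y(1-\xi/\log y)}{\log y}=\frac{e^{\xi}-1}{\xi}\,\bigl(1+O(\varepsilon)\bigr)+O\!\left(\frac1{\log y}\right)
\]
for a suitably small $\varepsilon$.

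\emph{Step 3: solve the model and bracket $\alpha$.} Let $\xi(u)$ be the positive solution of $e^{\xi}=1+u\xi$. Iterating $\xi=\log(1+u\xi)$ gives $\xi(u)=\log(u\log(u+1))+O(1)$ uniformly in $u\geq1$. Put $\xi_\pm=\log(u\log(u+1))\pm K$ with $K$ a large absolute constant. Then $(e^{\xi_\pm}-1)/\xi_\pm$ differs from $u$ by a factor of size roughly $e^{\pm K}$. By Step~2, this gives $F_y(1-\xi_+/\log y)>\log x>F_y(1-\xi_-/\log y)$. Since $F_y$ is decreasing, $\alpha$ lies between the two test points, which is exactly the claimed formula.

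The main obstacle is keeping all errors uniform near the endpoint $y\approx\log x$, where $\alpha$ is of order $(\log\log y)/\log y$ rather than near $1$. There, both the prime number theorem remainder and the correction in Step~2 must be shown to perturb $(e^\xi-1)/\xi$ only by a bounded factor. I would first try bounding the correction by $\int_2^y t^{-2a}\,dt$ and comparing it with $y^{1-a}/(1-a)$ directly in that range.
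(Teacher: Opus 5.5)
\textbf{Gap in Step~2: the size of $\alpha$ near $y\approx\log x$.} The paper does not prove this proposition; it imports it from Younis's (1.5), which goes back to Hildebrand and Tenenbaum. Your route (a model integral, then bracketing $\alpha$ using the monotonicity of $F_y$) is the standard one. However, Step~2 contains a concrete error, and it sits exactly in the regime you flag as the obstacle.

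From $y>\log x$ you only get $u\log(u+1)\ll y$, i.e.\ $\xi\le\log y+O(1)$. Your bound $\xi\le\log y-\log\log y+O(1)$ drops the term $\log\log(u+1)\approx\log\log y$. For $y\sim\log x$ one has $u\log(u+1)=y(1+o(1))$. So the very formula being proved gives $\alpha\log y=\log y-\log(u\log(u+1))+O(1)=O(1)$; in fact $\alpha\log y\to\log2$. Thus $\alpha\asymp1/\log y$ there, not $\gg(\log\log y)/\log y$.

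With $\beta=a\log y$ bounded, every prime $q\le y$ has $q^{-a}\ge e^{-\beta}$, hence $\frac{1}{q^a-1}=\frac{q^{-a}}{1-q^{-a}}\ge\frac{q^{-a}}{1-e^{-\beta}}$. So $F_y(a)$ exceeds your model by a factor of at least $1/(1-e^{-\beta})$, which is bounded away from $1$. Consequently:
\begin{itemize}
\item Your Step~2 display, with $1+O(\varepsilon)$ and $\varepsilon$ small, is false in this range.
\item Your proposed comparison with $\int_2^yt^{-2a}\,dt$ cannot hold termwise, since $1/(q^a(q^a-1))\approx q^{-2a}/(a\log q)$ when $a\log q$ is small.
\item Here $\xi_+=\log(u\log(u+1))+K>\log y$, so your lower test point $1-\xi_+/\log y$ is negative and $F_y$ is undefined there.
\end{itemize}

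\textbf{Repair via one-sided bounds.} Using one-sided bounds fixes the argument and also shows where $y>\log x$ genuinely enters.

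The lower test point needs no correction at all. Since $1/(q^a-1)>q^{-a}$, we have $F_y(a)\ge\sum_{q\le y}(\log q)q^{-a}$. So Step~1 alone yields $F_y(a_+)>\log x$ when $a_+=1-\xi_+/\log y>0$, and $\alpha>a_+$ is trivial when $a_+\le0$.

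The correction matters only for the upper bound at $a_-=1-\xi_-/\log y$. There $a_-\log y=\log y-\log(u\log(u+1))+K\ge K-O(1)$, precisely because $u\log(u+1)\ll y$.
\begin{itemize}
\item If $a_-\le1/3$, split the primes at $\sqrt y$. Using $q^a-1\ge a\log q$, the primes $q\le\sqrt y$ contribute at most $\pi(\sqrt y)/a_-=o(y^{1-a_-})$, which is negligible against the main term $\gg y^{1-a_-}$. For $q>\sqrt y$ one has $1/(1-q^{-a_-})\le1/(1-e^{-(K-O(1))/2})=1+O(e^{-K/2})$.
\item If $a_-\ge1/3$, a crude bound as in your case $a\ge2/3$ makes the correction negligible.
\end{itemize}
For $K$ large this gives $F_y(a_-)<\log x$, and the bracketing of Step~3 then goes through.
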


\begin{prop}[Decay away from the real axis, {\cite[(1.6)]{Younis2024}}]\label{prop:decay}
Fix $r>0$. For $\log x<y\leq x$ sufficiently large in terms of $r$, and
\[
 \frac{1}{\log y}\leq |t|
 \leq \exp\!\left((\log y)^{3/2-r}\right),
\]
there is a constant $c_r>0$ such that
\[
 \frac{|Z_y(\alpha+it)|}{Z_y(\alpha)}
 \ll_r
 \exp\!\left(
 -\frac{c_r u t^2}{t^2+(1-\alpha)^2}
 \right).
\]
\end{prop}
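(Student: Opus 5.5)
The plan is the standard Hildebrand--Tenenbaum comparison of $|Z_y(\alpha+it)|$ with $Z_y(\alpha)$ through a lower bound for a prime sum. Everything takes place in $\Re(s)>0$, where each Euler factor is nonzero. Write $\sigma=\alpha$. Then
\[
 \log\frac{Z_y(\alpha)}{|Z_y(\alpha+it)|}
 =\sum_{q\leq y}\Re\log\frac{1-q^{-\alpha-it}}{1-q^{-\alpha}}.
\]
A first step is an elementary inequality for $0<r=q^{-\alpha}<1$:
\[
 \log\frac{|1-re^{-i\theta}|}{1-r}\geq \tfrac12\log\!\Bigl(1+\frac{2r(1-\cos\theta)}{(1-r)^2}\Bigr).
\]
The right-hand side is at least a constant times $r(1-\cos\theta)$ once $r$ is bounded away from $1$. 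By Proposition~\ref{prop:location} we have $\alpha\geq 1/2$, say, in the range $\log x<y$, so this holds for all primes $q\geq3$. Dropping small primes then gives
\[
 \log\frac{Z_y(\alpha)}{|Z_y(\alpha+it)|}\gg \sum_{\sqrt y<q\leq y} q^{-\alpha}\bigl(1-\cos(t\log q)\bigr).
\]
Restricting to $q>\sqrt y$ keeps the main contribution and simplifies the error analysis.

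The second step evaluates this sum by partial summation against $\pi(v)$. The main term is
\[
 \Re\int_{\sqrt y}^{y}\frac{v^{-\alpha}-v^{-\alpha-it}}{\log v}\,dv .
\]
With $\beta=1-\alpha$ and the substitution $v=e^w$, the integral is $\Re\int_{\frac12\log y}^{\log y}\bigl(e^{\beta w}-e^{(\beta-it)w}\bigr)\,dw/w$. Since $\beta\log y=\log(u\log(u+1))+O(1)$ by Proposition~\ref{prop:location}, $e^{\beta w}$ is dominated by the upper endpoint. Hence the integral equals
\[
 \frac{y^{\beta}}{\log y}\Bigl(\frac1\beta-\Re\frac{e^{-it\log y}}{\beta-it}\Bigr)\bigl(1+o(1)\bigr)
\]
up to lower-order terms. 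A short case analysis, comparing $|t|$ with $\beta$ and using $|t|\geq1/\log y$ to control the oscillating term $e^{-it\log y}$, gives a lower bound
\[
 \gg \frac{y^\beta}{\beta\log y}\cdot\frac{t^2}{t^2+\beta^2}.
\]
The endpoint oscillation cannot cancel the main term in either case. If $|t|\gg\beta$, the term $\beta/|\beta-it|$ is small. If $|t|\leq\beta$, one averages over the $w$-range, because $t\log y\geq1$ forces genuine oscillation. Finally $y^\beta/(\beta\log y)\asymp u\log(u+1)/\log(u\log u)\gg u$, again from Proposition~\ref{prop:location}. This yields the exponent $c_r u t^2/(t^2+(1-\alpha)^2)$.

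The main obstacle is the error term in the partial summation for large $|t|$. We must show that
\[
 \int_{\sqrt y}^{y} v^{-\alpha-it}\,d\bigl(\pi(v)-\mathrm{li}(v)\bigr)
\]
is $o(u)$ uniformly for $|t|\leq\exp((\log y)^{3/2-r})$. Here I would use the Vinogradov--Korobov zero-free region, equivalently a bound for $\sum_{q\leq v}q^{-it}$ via the explicit formula or directly via Vinogradov's exponential sum estimates. This gives
\[
 \sum_{n\leq v}\Lambda(n)n^{-it}=\frac{v^{1-it}}{1-it}+O\!\left(v\exp\!\bigl(-c(\log v)^{3/5}(\log\log v)^{-1/5}\bigr)\right)
\]
for $\log|t|\ll(\log v)^{3/2-r}$. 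The height condition with exponent $3/2-r$ is exactly what keeps $\log|t|$ below $(\log v)^{3/2}$, so the zero-free region $\sigma>1-c(\log|t|)^{-2/3}(\log\log|t|)^{-1/3}$ still saves a power of $\exp((\log y)^{\epsilon_r})$. After partial summation with weight $v^{-\alpha}/\log v$, the error is $\ll y^{\beta}\exp(-c'(\log y)^{\epsilon_r})$. This is $o(u)$ since $y^\beta\ll u\log u$, and that step closes the argument. The implied constants depend only on $r$, as claimed.
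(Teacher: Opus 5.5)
The paper gives no proof of this proposition. It imports the estimate verbatim from \cite[(1.6)]{Younis2024}, where it is the classical Hildebrand--Tenenbaum decay estimate. Your proposal reconstructs that classical argument:
\begin{itemize}
\item the lower bound $\log\bigl(Z_y(\alpha)/|Z_y(\alpha+it)|\bigr)\ge\sum_{p\le y}p^{-\alpha}(1-\cos(t\log p))$;
\item partial summation;
\item the Vinogradov--Korobov zero-free region at height $|t|$, which is exactly what produces the limit $\exp((\log y)^{3/2-r})$.
\end{itemize}
The route is correct. Citing buys the paper brevity, while your version makes explicit where the height restriction and the $r$-dependence of $c_r$ come from. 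Some steps in your write-up need repair, though none is fatal.

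(i) The claim $\alpha\ge 1/2$ is false near $y=\log x$, where Proposition~\ref{prop:location} gives $1-\alpha$ close to $1$. You do not need it, since for every $\alpha>0$
\[
\log\frac{|1-q^{-\alpha-it}|}{1-q^{-\alpha}}=\sum_{k\ge1}\frac{1-\cos(kt\log q)}{kq^{k\alpha}}\ge\frac{1-\cos(t\log q)}{q^{\alpha}}.
\]

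(ii) For the main term, use $1/w\ge 1/\log y$. With $\theta=t\log y$ and $\beta=1-\alpha$, the upper endpoint then contributes
\[
\frac{y^\beta}{\log y}\cdot\frac{\beta^2(1-\cos\theta)-\beta t\sin\theta+t^2}{\beta(\beta^2+t^2)}.
\]
The numerator is at least $t^2/2$ for every real $\theta$: minimize it as a quadratic in $\beta$ to get $t^2(3-\cos\theta)/4$. So no case analysis is needed, and ``genuine oscillation'' is not the mechanism; at $\theta=2\pi$ there is no endpoint oscillation at all. The hypothesis $|t|\ge 1/\log y$ is needed only to absorb the lower-endpoint term $O(y^{\beta/2}/(\beta\log y))$ and the arithmetic error. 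Bounded $u$ must be set aside separately, which is trivial, since endpoint domination needs $\beta\log y\to\infty$.

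(iii) Your displayed remainder, with saving $\exp(-c(\log v)^{3/5}(\log\log v)^{-1/5})$, is not valid up to $\log|t|\asymp(\log v)^{3/2-r}$. In that range the zero-free region only gives a saving of $\exp\bigl(-c(\log y)^{2r/3}(\log\log y)^{-1/3}\bigr)$, which is what your next sentence effectively uses.

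(iv) Most importantly, $o(u)$ is the wrong target. The error must be small compared with the main term $\gg u\,t^2/(t^2+\beta^2)$. At $t\log y=2\pi$ this main term is only of order $u/\log^2(u+1)$, so an $o(u)$ error would not suffice. Your bound $\ll u\log(u+1)\,e^{-c'(\log y)^{\epsilon_r}}$ does beat $u/\log^2(u+1)$, because $\log(u+1)\le\log y$. That comparison, not $o(u)$, is what closes the argument.
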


\begin{prop}[Truncated counting integral, {\cite[(1.9)]{Younis2024}}]\label{prop:perron}
For $2\leq y,h,H\leq x$,
\[
 \Psi(x+h,y)-\Psi(x,y)
 =
 \frac{1}{2\pi i}
 \int_{\alpha-iH}^{\alpha+iH}
 Z_y(s)\frac{(x+h)^s-x^s}{s}\,ds
 +O\!\left(\frac{x\log x}{H}\right).
\]
\end{prop}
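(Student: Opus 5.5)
The plan is to derive Proposition~\ref{prop:perron} from the truncated Perron formula applied to the Dirichlet series of $y$-smooth integers, using Proposition~\ref{prop:mass} to control the error. For $\Re(s)>0$ the Euler product expands as
\[
 Z_y(s)=\sum_{n\ y\text{-smooth}} n^{-s},
\]
and this series converges absolutely, since it is a finite product of absolutely convergent geometric series. Fix the abscissa $c=\alpha>0$. The standard truncated Perron lemma states that, for $X>0$ and $n\neq X$,
\[
 \frac{1}{2\pi i}\int_{\alpha-iH}^{\alpha+iH}\Bigl(\frac{X}{n}\Bigr)^s\frac{ds}{s}
 =\mathbf 1_{n<X}+O\!\left(\Bigl(\frac{X}{n}\Bigr)^{\alpha}\min\Bigl(1,\frac{1}{H|\log(X/n)|}\Bigr)\right).
\]
When $n=X$ the main term is $1/2$ and the error is $O(\alpha/H)$. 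Multiply by the coefficient $1$ of each smooth $n$, sum over $n$ (absolute convergence justifies the interchange), and apply this with $X=x+h$ and with $X=x$. Subtracting the two identities gives the integral of $Z_y(s)\bigl((x+h)^s-x^s\bigr)/s$. Correcting for the half-weights at the endpoints changes the main term from $\Psi(x+h,y)-\Psi(x,y)$ by $O(1)$, and this is absorbed because $H\leq x$.

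Everything then reduces to bounding, for $X\in\{x,x+h\}$ with $x\leq X\leq 2x$, the error
\[
 E(X)=\sum_{n\ y\text{-smooth}}\Bigl(\frac{X}{n}\Bigr)^{\alpha}\min\Bigl(1,\frac{1}{H|\log(X/n)|}\Bigr).
\]
Split the sum into two ranges.

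\emph{Far range, $|\log(X/n)|\geq 1/2$.} Here the minimum is $\leq 2/H$, so this part is
\[
 \ll H^{-1}X^{\alpha}Z_y(\alpha)\ll W/H,
\]
using $X\leq 2x$ and $\alpha\leq 1$ for the $y$ considered; for general $\alpha$ one gets $2^\alpha W/H$. Proposition~\ref{prop:mass} gives $W\ll\Psi(x,y)\sqrt{\log x\log y}\leq x\log x$, since $y\leq x$. Hence this part is $O(x\log x/H)$.

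\emph{Near range, $|\log(X/n)|<1/2$.} Here $n\asymp X\asymp x$, so $(X/n)^\alpha\ll 1$ and $|\log(X/n)|\asymp |X-n|/x$. Dropping the smoothness restriction, this part is
\[
 \ll\sum_{|n-X|\leq x}\min\Bigl(1,\frac{x}{H|n-X|}\Bigr).
\]
The at most two integers with $|n-X|<1$ contribute $O(1)$. The remaining integers contribute
\[
 \frac{x}{H}\sum_{1\leq m\leq x}\frac{1}{m}\ll\frac{x\log x}{H}.
\]
Since $H\leq x$, the $O(1)$ term is also $\ll x\log x/H$.

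Combining the two ranges gives the stated error $O(x\log x/H)$, uniformly in $2\leq y,h,H\leq x$. The only point needing care is that the far-range error is governed by $W$ rather than by $x$ itself. I expect that to be the one real obstacle, and Proposition~\ref{prop:mass} disposes of it: it bounds $W$ by $x\log x$ and prevents any loss from the saddle-point normalisation. The rest is the routine Perron bookkeeping described above.
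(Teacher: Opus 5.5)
Your argument is correct, but it is not the paper's route, because the paper gives no proof here. It imports the estimate verbatim from \cite[(1.9)]{Younis2024} as a standard truncated Perron formula. Your derivation applies the truncated Perron lemma at abscissa $\alpha$, sums over $y$-smooth $n$ using absolute convergence, and splits the error into far and near ranges. That is the standard argument behind the cited formula, so what it buys is self-containment.

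Three small points need tightening.

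\begin{itemize}
\item You use that $\alpha$ is bounded: for $2^\alpha\ll 1$ in the far range, for $(X/n)^\alpha\le e^{\alpha/2}\ll1$ in the near range, and for the $O(\alpha/H)$ term at $n=X$. However, $\alpha\le1$ can fail in the stated range $2\le y\le x$. For example, with $x=y=3$ one has $F_3(1)=\log2+\tfrac12\log3>\log3$, so $\alpha>1$. This is harmless because
\[
 F_y(2)\le\sum_q\frac{\log q}{q^2-1}=-\frac{\zeta'(2)}{\zeta(2)}<0.58<\log2\le\log x,
\]
and since $F_y$ is strictly decreasing, $\alpha<2$ always. You should state this.

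\item The step you single out as the real obstacle does not need Proposition~\ref{prop:mass}. Since $\alpha$ minimizes $a\mapsto x^aZ_y(a)$ (Definition~\ref{defn:smooth}), taking $a=1+1/\log x$ gives
\[
 W\le x^{1+1/\log x}\,\zeta\!\left(1+\frac1{\log x}\right)\le e\,x\,(1+\log x),
\]
which is all you use.

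\item In the near range $|\log(X/n)|<1/2$, the distance $|n-X|$ can reach $(e^{1/2}-1)X\approx1.3x$, not just $x$. The harmonic-sum bound $\ll x\log x/H$ is unaffected.
\end{itemize}
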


\begin{prop}[The central integral, {\cite[Lemma~3.6]{Younis2024}}]\label{prop:central}
Uniformly for $2\leq y\leq x$,
\[
 \frac{1}{2\pi i}
 \int_{\alpha-i/\log y}^{\alpha+i/\log y}
 x^sZ_y(s)\,ds
 =
 \alpha\Psi(x,y)
 \left(1+O\!\left(\frac1u+\frac{\log y}{y}\right)\right).
\]
On the same segment,
\[
 \int_{\alpha-i/\log y}^{\alpha+i/\log y}
 |x^sZ_y(s)|\,|ds|
 \ll \Psi(x,y)
\]
and
\[
 \int_{\alpha-i/\log y}^{\alpha+i/\log y}
 |s-1|\,|x^sZ_y(s)|\,|ds|
 \ll
 \Psi(x,y)\frac{\log(u+1)}{\log y}.
\]
\end{prop}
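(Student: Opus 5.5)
The plan is the classical saddle-point (Hildebrand--Tenenbaum) analysis on the short segment $s=\alpha+it$, $|t|\le 1/\log y$. The first step is to write $\phi(t)=\log\bigl(x^{\alpha+it}Z_y(\alpha+it)\bigr)-\log W$ and Taylor expand it in $t$. By the defining equation $F_y(\alpha)=\log x$, the linear term vanishes, so
\[
 \phi(t)=-\tfrac12\sigma_2 t^2+O(\sigma_3|t|^3),
 \qquad
 \sigma_j=\sum_{q\le y}\frac{(\log q)^j q^{\alpha}(\ldots)}{(q^\alpha-1)^j}\asymp \Bigl(1+\frac{\log x}{y}\Bigr)(\log x)(\log y)^{j-1}.
\]
These are the standard size estimates for the derivatives $(-1)^j(\log Z_y)^{(j)}(\alpha)$, uniform in $2\le y\le x$.

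The second step splits the segment at $|t|=T_0:=(\sigma_2)^{-1/2}\,u^{1/6}$ or a similar scale with $\sigma_3T_0^3=o(1)$.
\begin{itemize}
\item On $|t|\le T_0$, the integrand is $W e^{-\sigma_2t^2/2}(1+O(\sigma_3|t|^3))$, and the Gaussian integral yields
\[
 \frac{1}{2\pi}\int x^sZ_y(s)\,dt=\frac{W}{\sqrt{2\pi\sigma_2}}\Bigl(1+O\bigl(\tfrac1u\bigr)\Bigr).
\]
The odd cubic term integrates to zero, and the quartic and squared-cubic terms give the relative error $O(\sigma_4/\sigma_2^2+\sigma_3^2/\sigma_2^3)=O(1/u)$.
\item On $T_0\le|t|\le1/\log y$, I use the lower bound $\log|Z_y(\alpha)/Z_y(\alpha+it)|\gg \sigma_2 t^2$. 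This follows from $1-\cos(t\log q)\gg (t\log q)^2$ for $|t|\log q\le 1$, which is the point of truncating at $1/\log y$. It makes the tail negligible.
\end{itemize}

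The third step compares this with the Hildebrand--Tenenbaum asymptotic
\[
 \Psi(x,y)=\frac{W}{\alpha\sqrt{2\pi\sigma_2}}\Bigl(1+O\Bigl(\frac1u+\frac{\log y}{y}\Bigr)\Bigr),
\]
which gives the first claim. I would rederive this asymptotic from the same Perron integral with Proposition~\ref{prop:perron}, bounding the outer range via Proposition~\ref{prop:decay}, or else cite it directly. The term $\log y/y$ enters exactly from this comparison in the small-$y$ regime.

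The two absolute bounds then follow from the same Gaussian majorant. The first is
\[
 \int|x^sZ_y(s)|\,|ds|\ll W\sigma_2^{-1/2}\asymp \alpha\Psi(x,y)\le\Psi(x,y),
\]
using $\alpha\le 1$ in the relevant range (Proposition~\ref{prop:location}), or $W\sigma_2^{-1/2}\ll \Psi$ directly. For the second, I use $|s-1|\le (1-\alpha)+|t|$. Proposition~\ref{prop:location} gives $1-\alpha\ll \log(u+1)/\log y$, and the $|t|$-weighted Gaussian contributes $W/\sigma_2\ll \Psi\,\sigma_2^{-1/2}\ll\Psi/(\sqrt u\log y)$, which is smaller. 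For $y\le\log x$, where Proposition~\ref{prop:location} is unavailable, $|s-1|\ll1$ and $\log(u+1)/\log y\gg1$, so the bound is trivial from the previous one.

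The main obstacle is uniformity over the full range $2\le y\le x$, especially $y$ near or below $\log x$. There $\alpha$ is small, the $\sigma_j$ have the extra factor $\log x/y$, and one must check that the cubic and quartic error terms are still $O(1/u+\log y/y)$ relative to the main term. I would handle this by quoting the uniform moment estimates of Hildebrand--Tenenbaum (their Lemma 4 and Theorem 1) rather than recomputing them.
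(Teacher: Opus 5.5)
The paper gives no proof of this statement. It imports it from Younis's Lemma~3.6 and applies it only at $y=\exp(\sqrt{\log x\log\log x})$. Your reconstruction follows the standard Hildebrand--Tenenbaum saddle-point route. It is sound whenever $\alpha\log y\gg1$, i.e.\ for $y\gg\log x$, which covers that application. As a proof of the statement \emph{uniformly for $2\leq y\leq x$}, however, it has a genuine gap in exactly the range you flag, and the concrete steps you offer there are false.

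\textbf{The moment sizes.} These are wrong for $y=o(\log x)$. In that range $\alpha\log y\to0$, $\alpha\sim\pi(y)/\log x$ and $\sigma_j\sim(j-1)!\,\pi(y)\alpha^{-j}$. Hence $\sigma_j\asymp(1+\log x/y)^{j-1}(\log x)(\log y)^{j-1}$, with exponent $j-1$ on the first factor rather than $1$. Two consequences follow.
\begin{enumerate}
\item $\sigma_4/\sigma_2^2+\sigma_3^2/\sigma_2^3\asymp 1/u+\log y/y$. So the term $\log y/y$ already comes out of the local Gaussian expansion, not merely from the comparison with Hildebrand--Tenenbaum.
\item Your cutoff $T_0=\sigma_2^{-1/2}u^{1/6}$ gives $\sigma_3T_0^3\asymp(\log x/y)^{1/2}\to\infty$, so it is not admissible there.
\end{enumerate}

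\textbf{The tail inequality.} The bound $\log|Z_y(\alpha)/Z_y(\alpha+it)|\gg\sigma_2t^2$ on $|t|\leq1/\log y$ fails in this range. Write the left side as $\frac12\sum_{q\leq y}\log(1+z_q)$ with $z_q=2q^\alpha(1-\cos(t\log q))/(q^\alpha-1)^2\approx t^2/\alpha^2$. The step $\log(1+z_q)\gg z_q$ requires $|t|\ll\alpha$, but $1/\log y$ exceeds $\alpha$ by a factor $\asymp\log x/y$. The true size of the left side is only of order $\pi(y)\log(1+t^2/\alpha^2)$. That weaker bound can be made to suffice when $\pi(y)\geq2$.

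\textbf{The case $2\leq y<3$.} Here the weaker bound does not suffice, and no majorant argument can. At $y=2$ one has $\alpha\sim1/\log x$ and $\alpha\Psi(x,2)\asymp1$, yet
\[
\int|x^sZ_y(s)|\,|ds|\asymp\int_{|t|\leq1/\log2}(\alpha^2+t^2)^{-1/2}\,dt\asymp\log\log x .
\]
So your intermediate bound $\int|x^sZ_y|\,|ds|\ll W\sigma_2^{-1/2}$ is false there. The first claim holds only through cancellation. For instance, expand $Z_2(s)=\sum_{j\geq0}2^{-js}$, integrate termwise, and bound $\sum_j (x/2^j)^\alpha\sin(\log_2(x/2^j))/(\pi\log(x/2^j))=O(1)$ by partial summation.

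Quoting Hildebrand--Tenenbaum's Lemma~4 and Theorem~1 does not close these gaps. Those results control the moments and $\Psi$ itself, not the tail of this truncated integral. You should either restrict the statement to $y\geq\log x$, which is all the paper uses, or supply separate small-$y$ arguments along the lines above.
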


These are the saddle-point and central-integral estimates recorded in the cited source. In particular, its smoothness restriction (1.2) is not imposed on the inputs above in the ranges just stated.

\subsection{A local zero-free estimate}

\begin{prop}[Local von Mangoldt sum, {\cite[Lemma~2.3]{Younis2024}}]\label{prop:local-input}
Let $z\geq2$, $0<e<1/10$ and $0<\eta\leq1/2$. Assume that $\zeta$ has no zeros in
\[
 \Re(s)>1-\eta,\qquad
 |\Im(s)-t|<10z^\eta.
\]
For $s=a+it\neq1$ with
$1-(1-e)\eta\leq a<1+1/\log z$,
\begin{align}
 \left|\sum_{n\leq z}\Lambda(n)n^{-s}\right|
 &\leq
 \frac{z^{1-a}+1}{|1-s|}
 +O_e\!\left((\log(|t|+2))^b\right)
 +O\!\left(\frac{(\log z)^2}{z^{e\eta}}\right)
 \notag\\
 &\quad+
 O_e\!\left(
 \frac{
 ((\log(|t|+z^\eta+2))^b+1/\eta)
 \log(z^\eta/\eta)}
 {z^{e\eta/2}}
 \right),\label{eq:local-input}
\end{align}
where $b=b(e)<1$ and the constants are uniform in the remaining parameters.
\end{prop}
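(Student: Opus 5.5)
Since this is Lemma~2.3 of \cite{Younis2024}, the plan is to reconstruct its proof by a truncated Perron formula combined with a local bound for $\zeta'/\zeta$ inside the assumed zero-free box. The identity is $\Lambda(n)=$ coefficient of $-\zeta'/\zeta$. With $s=a+it$ fixed, $c=1-a+1/\log z>0$ and truncation height $T$, I would write
\[
 \sum_{n\leq z}\Lambda(n)n^{-s}
 =\frac{1}{2\pi i}\int_{c-iT}^{c+iT}
 -\frac{\zeta'}{\zeta}(s+w)\,\frac{z^w}{w}\,dw
 +O\!\left(\frac{z^{1-a}(\log z)^2}{T}+\ldots\right).
\]
The natural choice is $T\asymp z^{\eta}$, so that the vertical window $|\Im(s+w)-t|\leq T$ lies inside the box $|\Im-t|<10z^\eta$. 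With $a\geq 1-(1-e)\eta$ we have $z^{1-a}\leq z^{(1-e)\eta}$. The truncation error is then at most $(\log z)^2 z^{-e\eta}$, which is the third term of \eqref{eq:local-input}.

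Next I would shift the contour to the left, to the line $\Re(s+w)=\sigma_1:=1-(1-e/2)\eta$. This line still lies in the zero-free region, with room $e\eta/2$ to spare. Two poles are crossed.
\begin{itemize}
\item The pole at $w=1-s$ comes from $\zeta$ at $1$ and has residue $z^{1-s}/(1-s)$, of modulus $z^{1-a}/|1-s|$.
\item The pole at $w=0$ has residue $-\frac{\zeta'}{\zeta}(s)$.
\end{itemize}
The term $1/|1-s|$ in the first summand of \eqref{eq:local-input} absorbs the polar part of $-\zeta'/\zeta(s)$. What remains is holomorphic near $s$, and I would bound it by $O_e((\log(|t|+2))^b)$ with some $b<1$. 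That bound should come from a Borel--Carath\'eodory/Hadamard three-circles argument for $\log\zeta$ on discs centred slightly right of $\Re=1$, which stay inside the zero-free box. The key inputs are the convexity bound $\log|\zeta|\ll\log(|t|+2)$ on the far circle and boundedness on the near circle. Interpolating between the two gives a power $b=b(e)<1$ at distance $\geq e\eta$ into the zero-free region, relative to its width.

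On the shifted vertical line we have $|z^w|=z^{\sigma_1-a}\leq z^{-e\eta/2}$. The same disc argument, now applied near $\Re=\sigma_1$, should give
\[
 \frac{\zeta'}{\zeta}(\sigma_1+i\tau)\ll_e (\log(|\tau|+2))^b+\frac1\eta
 \qquad\bigl(|\tau-t|\leq T\bigr),
\]
where the $1/\eta$ term accounts for the pole at $1$ when the discs have radius comparable to $\eta$. Integrating $|dw|/|w|$ over a height $T=z^\eta$, with $|w|\geq e\eta/2$, contributes $\log(z^\eta/\eta)$. This produces the last term of \eqref{eq:local-input}. The horizontal connectors at height $\pm T$ are handled by the same bound: they have length $O(1)$ and carry a factor $1/T$, so they are dominated by the vertical-line estimate.

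The main obstacle is the uniform local bound for $\zeta'/\zeta$ using only the box-shaped zero-free hypothesis, with the explicit exponent $b<1$ and the $1/\eta$ dependence. I would first try the standard Landau lemma. Its input would be $\Re\log\zeta(\sigma+i\tau)\leq (1+o(1))\log\log(|\tau|+3)$ at $\sigma=1+1/\log$ together with convexity. Then I would sharpen this to the $b<1$ power via the three-circles theorem applied to $\log\zeta$. The circles must be placed so that they never leave the box $|\Im-t|<10z^\eta$; this is why the box height is a multiple of $z^\eta$.
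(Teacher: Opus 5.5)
Your contour skeleton is sound and reproduces the shape of \eqref{eq:local-input}. The paper itself gives no proof here; it quotes \cite[Lemma~2.3]{Younis2024}. Perron's formula with $T\asymp z^\eta$ gives the $(\log z)^2z^{-e\eta}$ term, the residue at $w=1-s$ gives $z^{1-a}/|1-s|$, and the line $\Re(s+w)=1-(1-e/2)\eta$ gives $z^{-e\eta/2}\log(z^\eta/\eta)$. Everything then rests on
\[
 \frac{\zeta'}{\zeta}(\sigma+i\tau)+\frac{1}{\sigma+i\tau-1}\ll_e(\log(|\tau|+2))^{b},
 \qquad \sigma\geq1-(1-e/2)\eta,
\]
in the zero-free box, with $b=b(e)<1$ and constants independent of $\eta\in(0,1/2]$. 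This is where the proposal has a genuine gap: Borel--Carath\'eodory and three circles, fed only by convexity, do not give that uniformity.
\begin{itemize}
\item If you centre the circles at a fixed point such as $2+i\tau$, so that the inner circle carries an $O(1)$ bound, the outer circle must stay in $\Re>1-\eta$ while the middle one reaches $\Re=1-(1-e/2)\eta$. The interpolation exponent is then about $\log(2+(2-e)\eta)/\log(2+(2-e/2)\eta)=1-O(e\eta)$, which tends to $1$ as $\eta\to0$.
\item If you scale the circles to $\eta$ (centre $1+\eta+i\tau$), the exponent $\lambda=\lambda(e)$ is uniform. But the inner circle only carries the bound $\log\zeta(1+\eta/2)\asymp\log(1/\eta)$. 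Passing from $\log\zeta$ to $\zeta'/\zeta$ by Cauchy's estimate on a disc of radius $\asymp e\eta$ costs a multiplicative factor $(e\eta)^{-1}$. You get $\eta^{-1}(\log(1/\eta))^{1-\lambda}(\log|\tau|)^{\lambda}$, not $(\log|\tau|)^b+1/\eta$, and at $\eta=(\log|\tau|)^{-(1-\lambda)}$ this is already $\gg\log|\tau|$.
\end{itemize}

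This is not a bookkeeping issue. Suppose $\rho_0=1-\eta+i\gamma_0$ is a zero and the box to its right is zero-free and tall. At $s=1-(1-e)\eta+i\gamma_0$, every zero in the box contributes positively to $\Re\sum_\rho(s-\rho)^{-1}$ in the Hadamard product, so $\Re\frac{\zeta'}{\zeta}(s)\geq\frac1{e\eta}-\bigl(\frac12+o(1)\bigr)\log\gamma_0$. Your uniform estimate would therefore force $1-\beta_0\geq(2-o(1))/(e\log\gamma_0)$. That is essentially a zero-free region $\sigma>1-C/\log t$ with $C\to\infty$ as $e\to0$, which lies beyond the de la Vall\'ee Poussin method and needs exponential-sum input. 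For small $\eta$ your argument is missing Vinogradov--Korobov-type information. One piece is a growth bound $\log|\zeta(\sigma+it)|\leq B(1-\sigma)^{3/2}\log t+O(\log\log t)$ on the $\eta$-scaled circles, which offsets the $\eta^{-1}$ loss. The other is the Vinogradov--Korobov zero-free region for the smallest $\eta$. Alternatively, you can restrict to $\eta\geq d$ for a fixed $d>0$, which is all that Lemma~\ref{lem:local-shift} uses. Your argument then works as written, but with $b$ and the implied constants depending on $d$ as well as $e$, not on $e$ alone as the statement asserts. A minor point: choose $T$ with $\bigl|\,|t|-T\,\bigr|\geq1$, so that the horizontal connectors stay away from the pole at $s+w=1$.
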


This local estimate assumes neither RH nor the smoothness restriction (1.2) of the cited source.

\subsection{Zero density}

\begin{prop}[Zero density, {\cite[(1.13)]{Younis2024}}]\label{prop:zero-density}
Let $N(\sigma,T)$ count nontrivial zeros $\rho=\beta+i\gamma$ of $\zeta$ with $\beta\geq\sigma$ and $|\gamma|\leq T$, with multiplicity. For each fixed $r>0$,
\[
 N(\sigma,T)\ll_r T^{(30+r)(1-\sigma)/13}
 \qquad
 \left(\frac12\leq\sigma\leq1,\ T\geq2\right).
\]
Consequently, there is an absolute constant $C\geq1$ such that
\[
 N(1-v,T)\leq C T^{(12/5)v}
 \qquad
 \left(0\leq v\leq\frac12,\ T\geq2\right).
\]
\end{prop}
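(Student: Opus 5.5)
The second assertion follows from the first by fixing one admissible value of $r$. The plan is to choose $r$ so that the exponent $(30+r)/13$ is at most $12/5$, and then compare powers of $T$ using $T\geq2$. No other input is needed; the first bound is taken as given from \cite[(1.13)]{Younis2024}.

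\emph{Choice of $r$.} The condition $(30+r)/13\leq 12/5$ is equivalent to $30+r\leq 156/5=31.2$, so it holds for any $0<r\leq 6/5$. I would take $r=1$, which gives exponent $31/13<12/5$. Let $C_1$ be the implied constant in the first display for $r=1$; it is absolute, since $r$ is now fixed.

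\emph{Reduction to the stated range.} For $0\leq v\leq 1/2$ put $\sigma=1-v$, so that $1/2\leq\sigma\leq1$ and $1-\sigma=v$. The first bound then gives
\[
 N(1-v,T)\leq C_1\,T^{31v/13}\qquad(T\geq2).
\]

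\emph{Comparison of exponents.} Since $T\geq 2>1$ and $v\geq0$, we have $31v/13\leq 12v/5$, and therefore $T^{31v/13}\leq T^{12v/5}$. Setting $C=\max(C_1,1)$ gives $N(1-v,T)\leq C\,T^{(12/5)v}$ on the whole range.

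The endpoint $v=0$ needs no separate treatment. There $T^{0}=1$, and the inequality holds with $C\geq1$ (indeed $N(1,T)=0$). The argument contains no genuine obstacle; the only point to check is that some admissible $r>0$ makes $(30+r)/13\leq12/5$, which $r=1$ does.
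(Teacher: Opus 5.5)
Your proof is correct and matches the paper's argument: fix one admissible $r$ so that $(30+r)/13<12/5$ (the paper takes $r=1/100$, you take $r=1$), then compare powers of $T\geq2$. Your extra check that $31\cdot5=155<156=12\cdot13$ and your choice $C=\max(C_1,1)$ complete the details the paper leaves implicit.
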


For the last statement, take $r=1/100$ and use
$(30+1/100)/13<12/5$. This is the Guth--Maynard zero-density estimate in the form quoted by \cite[(1.13)]{Younis2024}.

\section{Fixed-strip short intervals}\label{sec:analytic}

In this section, we prove Theorem~\ref{thm:strip} by establishing the local shift and then estimating every part of the counting contour.

\subsection{A long-interval estimate}

\begin{prop}[Long-interval density and a saddle-point lower bound]\label{prop:elementary}
For all real $x\geq y\geq16$,
\begin{equation}\label{eq:elementary-density}
 \Psi(x,y)
 \geq
 x\exp\!\left(
 -\log y-\frac{\log x}{\log y}\log(8\log x)
 \right).
\end{equation}
There is an absolute $x_0>e$ such that, for every real $x\geq x_0$, writing
\[
 L=\log x,\qquad S=\sqrt{L\log L},\qquad y=e^S,
\]
we have
\begin{equation}\label{eq:long-density}
 \Psi(x,y)\geq xe^{-3S}
\end{equation}
and
\begin{equation}\label{eq:alpha-lower}
 \alpha(x,y)\geq1-\frac{2\log L}{S}>\frac12.
\end{equation}
\end{prop}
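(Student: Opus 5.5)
The proof has three parts: a direct counting bound for \eqref{eq:elementary-density}, a specialization of it to obtain \eqref{eq:long-density}, and Proposition~\ref{prop:location} for \eqref{eq:alpha-lower}.

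\emph{Counting bound.} To prove \eqref{eq:elementary-density}, put $u=\log x/\log y\geq1$ and $k=\lfloor u\rfloor\geq1$, and count products of $k$ primes $\leq y$, repetitions allowed. Each such product is $y$-smooth and is at most $y^k\leq x$. By unique factorization, distinct multisets of $k$ primes give distinct integers. Hence
\[
 \Psi(x,y)\geq\binom{\pi(y)+k-1}{k}\geq\frac{\pi(y)^k}{k!}\geq\left(\frac{\pi(y)}{k}\right)^k .
\]
For $y\geq16$ we use the elementary bound $\pi(y)\geq y/(2\log y)$; Rosser's bound even gives $\pi(y)>y/\log y$ for $y\geq17$, and $y=16$ is checked directly. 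Since $k\log y\leq u\log y=\log x$, this gives
\[
 \left(\frac{\pi(y)}{k}\right)^k\geq\frac{y^k}{(2\log x)^k}.
\]
Finally $y^k\geq y^{u-1}=x/y$, and $(2\log x)^{-k}\geq(8\log x)^{-u}$ because $k\leq u$ and $2\log x\geq1$. Together these give \eqref{eq:elementary-density}.

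\emph{Specialization.} For \eqref{eq:long-density}, substitute $y=e^S$, so that $\log y=S$ and $u=L/S=\sqrt{L/\log L}$. Then
\[
 u\log(8L)=\frac{L}{S}(\log L+\log 8)=S\left(1+\frac{\log8}{\log L}\right)\leq2S
\]
once $L$ is large. The exponent in \eqref{eq:elementary-density} is therefore at least $-3S$. The hypotheses $x\geq y\geq16$ hold for large $x$, since $S\leq L$ and $S\to\infty$.

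\emph{Saddle point.} For \eqref{eq:alpha-lower}, apply Proposition~\ref{prop:location}. Its range condition $\log x<y\leq x$ holds for large $x$, since $e^S>L$ eventually. The proposition gives
\[
 1-\alpha=\frac{\log u+\log\log(u+1)+O(1)}{S}.
\]
Here $\log u=\tfrac12(\log L-\log\log L)$ and $\log\log(u+1)=O(\log\log L)$, so the numerator is $\tfrac12\log L\,(1+o(1))$. It is therefore below $2\log L$ for large $L$, which gives $\alpha\geq1-2\log L/S$. Since $2\log L/S=2\sqrt{\log L/L}\to0$, we also get $\alpha>1/2$ for large $x$. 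The threshold $x_0$ is the maximum of the finitely many largeness requirements above.

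The only place needing care is making the $O(1)$ in Proposition~\ref{prop:location} and the threshold explicit enough to fix an absolute $x_0$. No real obstacle is expected, because the main term $\tfrac12\log L$ sits comfortably below the target $2\log L$.
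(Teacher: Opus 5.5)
Your proof is correct, and your first two parts (counting multisets of $\lfloor u\rfloor$ primes, then substituting $y=e^S$ and using $(L/S)\log L=S$) follow the paper's argument. The only difference there is your prime-counting input. You cite the Rosser--Schoenfeld bound $\pi(y)>y/\log y$ for $y\geq17$; your direct check should cover all real $y\in[16,17)$, where $\pi(y)=6$, which is immediate. The paper instead derives the weaker, self-contained bound $\pi(t)\geq t/(8\log t)$ for $t\geq16$ from the central binomial coefficient.

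The genuine difference is in \eqref{eq:alpha-lower}. You deduce it from the cited saddle-point asymptotic of Proposition~\ref{prop:location}, which gives $1-\alpha=(\tfrac12+o(1))\log L/S$. The paper does not use that asymptotic here. It sets $a=1-2\log L/S$, so that $y^{1-a}=L^2$, and bounds $F_y(a)$ below using only the primes in $(\sqrt y,y]$: there are at least $y/(16S)$ of them, each contributing at least $(S/2)/y^a$. This gives $F_y(a)\geq L^2/32>L$, and monotonicity of $F_y$ together with $F_y(\alpha)=L$ forces $\alpha>a$.

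The paper's route keeps the proposition self-contained, relying only on the definition of $\alpha$ and its own Chebyshev bound, which it reuses here, and it makes $x_0$ explicitly computable. Your route is shorter and reuses an input the paper needs anyway in \eqref{eq:parameters}. It also identifies the sharper constant $\tfrac12$ in place of $2$, but your $x_0$ inherits the unspecified absolute constant and threshold of Proposition~\ref{prop:location}. Since the statement only asks for some absolute $x_0$, both suffice.
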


\begin{proof}
We first obtain the prime-counting estimate needed for the argument. Let $\pi(t)$ count primes at most $t$, and take $t\geq16$. Put $n=\lfloor t/2\rfloor$. Then $n\geq8$ and $n\geq t/4$. Since the central binomial coefficient is the largest coefficient of order $2n$,
\[
 \binom{2n}{n}\geq\frac{4^n}{2n+1}\geq2^n.
\]
Here $2n+1\leq2^n$ for $n\geq3$, as follows by induction. For each prime $q$, let $v_q(N)$ denote the exponent of $q$ in the prime factorization of $N$. Then
\[
 v_q\!\binom{2n}{n}
 =
 \sum_{j\geq1}
 \left(
 \left\lfloor\frac{2n}{q^j}\right\rfloor
 -
 2\left\lfloor\frac{n}{q^j}\right\rfloor
 \right).
\]
Each summand is either zero or one, and it vanishes when $q^j>2n$. Thus
$q^{v_q(\binom{2n}{n})}\leq2n$ for every prime $q\leq2n$, and
\[
 2^n\leq\binom{2n}{n}
 \leq(2n)^{\pi(2n)}
 \leq t^{\pi(t)}.
\]
Taking logarithms gives
\begin{equation}\label{eq:prime-lower}
 \pi(t)\geq\frac{n\log2}{\log t}
 \geq\frac{t\log2}{4\log t}
 \geq\frac{t}{8\log t}.
\end{equation}

Now let $x\geq y\geq16$, and put
\[
 L=\log x,\qquad v=\log y,\qquad
 r=\left\lfloor\frac{L}{v}\right\rfloor,\qquad N=\pi(y).
\]
Unique factorization distinguishes the products associated with multisets of $r$ primes at most $y$. Every such product is at most $y^r\leq x$. Therefore
\[
 \Psi(x,y)\geq\binom{N+r-1}{r}
 =\frac{N(N+1)\cdots(N+r-1)}{r!}
 \geq\left(\frac Nr\right)^r
 \geq\left(\frac{y}{8vr}\right)^r.
\]
The inequalities $L-v\leq rv\leq L$ and $r\leq L/v$ imply
\[
 \log\Psi(x,y)
 \geq rv-r\log(8vr)
 \geq L-v-\frac Lv\log(8L).
\]
This proves \eqref{eq:elementary-density}.

For $S=\sqrt{L\log L}$ and $y=e^S$, eventually $16\leq y\leq x$ and $L\geq8$. Hence
\[
 \log\frac{\Psi(x,y)}x
 \geq-S-\frac LS\log(8L)
 \geq-S-\frac{2L\log L}{S}
 =-3S.
\]

It remains to bound $\alpha$. Set
\[
 a=1-\frac{2\log L}{S},
\]
which exceeds $1/2$ for large $L$. By \eqref{eq:prime-lower} and
$\pi(\sqrt y)\leq\sqrt y$,
\[
 \pi(y)-\pi(\sqrt y)
 \geq\frac{y}{8S}-\sqrt y
 \geq\frac{y}{16S}
\]
for sufficiently large $L$. Indeed, $e^{S/2}/(16S)\to\infty$. For
$\sqrt y<q\leq y$, we have $\log q>S/2$ and $q^a-1\leq y^a$. Thus
\[
 F_y(a)
 \geq\frac{y}{16S}\frac{S}{2y^a}
 =\frac{y^{1-a}}{32}
 =\frac{L^2}{32}>L
\]
when $L>32$. Since $F_y$ is strictly decreasing and $F_y(\alpha)=L$, this gives $\alpha>a$. One absolute threshold handles all the preceding conditions.
\end{proof}

\subsection{A local horizontal shift}

\begin{lem}[Fixed-width local shift]\label{lem:local-shift}
Fix real constants $0<d\leq1/2$ and $0<e<1/10$. There is $X_0(d,e)$ with the following property. Let
\[
 x\geq X_0(d,e),\qquad
 \exp((\log\log x)^2)\leq y\leq x,\qquad
 d\leq\eta\leq\frac12.
\]
Let $\sigma_1<\sigma_2$ satisfy
\[
 1-(1-e)\eta\leq\sigma_1<\sigma_2<1+\frac1{\log y},
\]
and let $0<|t|\leq x$. Assume that $\zeta$ has no zeros in
\[
 \Re(s)>1-\eta,\qquad
 \Im(s)\in(t-10y^\eta,t+10y^\eta).
\]
If
\[
 |t|\geq\frac4{e\log x}\bigl(y^{1-\sigma_1}+1\bigr),
\]
then
\[
 |x^{\sigma_1+it}Z_y(\sigma_1+it)|
 \leq
 x^{-(1-e)(\sigma_2-\sigma_1)}
 |x^{\sigma_2+it}Z_y(\sigma_2+it)|.
\]
\end{lem}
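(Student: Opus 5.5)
The plan is to compare the two points through the logarithm of the integrand along the horizontal segment from $\sigma_1+it$ to $\sigma_2+it$. Write
\[
 \log\frac{|x^{\sigma_2+it}Z_y(\sigma_2+it)|}{|x^{\sigma_1+it}Z_y(\sigma_1+it)|}
 =\int_{\sigma_1}^{\sigma_2}\Re\!\left(\log x+\frac{Z_y'}{Z_y}(a+it)\right)da
 \geq\int_{\sigma_1}^{\sigma_2}\left(\log x-\left|\frac{Z_y'}{Z_y}(a+it)\right|\right)da .
\]
It therefore suffices to show that $|Z_y'/Z_y(a+it)|\leq e\log x$ for every $a\in[\sigma_1,\sigma_2]$. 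The integrand is then at least $(1-e)\log x$, and exponentiating gives exactly the claimed factor $x^{-(1-e)(\sigma_2-\sigma_1)}$.

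To bound the logarithmic derivative, I would use the decomposition \eqref{eq:prime-power-remainder}, which writes $-Z_y'/Z_y=\sum_{n\leq y}\Lambda(n)n^{-s}+R_y(s)$.

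\textbf{The remainder $R_y$.} For $a\geq 1-(1-e)\eta\geq1/2$ we have $|R_y(a+it)|\leq\sum_{q\leq y,\,q^j>y,\,j\geq2}(\log q)q^{-ja}$. For each $q$ the sum over $j$ is dominated by its first term. If $q>\sqrt y$, this term is at most $\log q\cdot q^{-2a}\leq \log q/q$, up to the factor coming from $a$ slightly below $1$. If $q\leq\sqrt y$, it is at most $\log q\cdot y^{-a}$. Summing over $q$, both contributions are $O(y^{1/2+\epsilon})$-free: concretely the total is $O(\log y)$. That is $o(\log x)$ as long as $\log y$ is small compared with $\log x$. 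If $y$ is close to $x$ this is not automatic. I expect to handle it more carefully, using $a\geq 1-(1-e)\eta\geq 1/2+e/2$ to get a bound of $O_e(1)$ via $\sum_q \log q\, q^{-2a}<\infty$ together with a tail bound on the $q\leq\sqrt y$ part by $\sqrt y\, y^{-a}\log y\ll1$. Either way this piece is negligible against $e\log x/4$ once $x\geq X_0$.

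\textbf{The von Mangoldt sum.} Apply Proposition~\ref{prop:local-input} with $z=y$, using the hypothesis on the zero-free rectangle, with the given $\eta\geq d$ and $e$. The main term is $(y^{1-a}+1)/|1-s|\leq (y^{1-\sigma_1}+1)/|t|\leq e\log x/4$, by the assumed lower bound on $|t|$ and since $a\geq\sigma_1$. For the remaining terms:
\begin{itemize}
 \item $O_e((\log(|t|+2))^b)\leq O_e((\log x)^b)$, because $|t|\leq x$ and $b<1$; this is $o(\log x)$.
 \item $(\log y)^2y^{-e\eta}\leq(\log y)^2 y^{-ed}$ is tiny, because $\log y\geq(\log\log x)^2$.
 \item The last term is at most $O_{e,d}\bigl((\log x)^b+1/d\bigr)\log(y/d)\,y^{-ed/2}$. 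Since $y^{ed/2}=\exp((ed/2)\log y)$ beats any power of $\log y$, and since $\log x\leq\exp(\sqrt{\log y})$, it is $o(1)$.
\end{itemize}
Hence all terms other than the main one total at most $e\log x/2$ for $x\geq X_0(d,e)$, and the sum of everything is at most $e\log x$.

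\textbf{Main obstacle.} The delicate step is making the error terms uniformly small using only the lower bound $y\geq\exp((\log\log x)^2)$. In particular, the term $(\log(|t|+y^\eta+2))^b\log(y^\eta/\eta)/y^{e\eta/2}$ must be controlled when $y^\eta$ could be as large as $x^{1/2}$. Here I would first try bounding $\log(|t|+y^\eta+2)\ll\log x$ and then using $y^{-ed/2}(\log y)\ll_{d,e}1$, with the choice of $X_0(d,e)$ absorbing all constants.
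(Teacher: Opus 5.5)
Your proposal is correct and follows the paper's proof essentially step for step. You integrate $\Re(\log x+Z_y'/Z_y)$ along the horizontal segment and reduce to $|Z_y'/Z_y|\le e\log x$ via \eqref{eq:prime-power-remainder}. Proposition~\ref{prop:local-input} at $z=y$ then gives the main term $\le e\log x/4$ from the height condition, and the remaining terms are $o(\log x)$ because $\log y\ge(\log\log x)^2$. One cleanup is needed in the remainder: drop the $O(\log y)$ first pass, which is insufficient when $y$ is near $x$, and bound the whole $j\ge2$ sum at once by $(1-2^{-(1+e)/2})^{-1}\sum_{n\ge2}(\log n)n^{-1-e}=O_e(1)$ using $a\ge(1+e)/2$, as the paper does, so no split at $\sqrt y$ is required.
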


\begin{proof}
Put $L=\log x$ and let $a\in[\sigma_1,\sigma_2]$. Apply Proposition~\ref{prop:local-input} with $z=y$. Since $t\neq0$, its first term is at most
\[
 \frac{y^{1-\sigma_1}+1}{|t|}\leq\frac{eL}{4}.
\]
The logarithms involving $t$ in the error terms are $O(L)$, because
$|t|,y\leq x$ and $\eta\leq1/2$. Also,
\[
 (\log L)^2\leq\log y\leq L,\qquad \eta\geq d.
\]
The last two errors in \eqref{eq:local-input} are consequently bounded by
\[
 O_{d,e}\!\left(L^2e^{-ed(\log L)^2}\right)
 \quad\text{and}\quad
 O_{d,e}\!\left((L^b+1)L e^{-(ed/2)(\log L)^2}\right),
\]
respectively. Each is $o(L)$ uniformly: after division by $L$, the negative quadratic term in $\log L$ dominates the logarithms of the remaining factors. The other error is $O_e(L^b)=o(L)$. Enlarging $X_0(d,e)$ makes their sum at most $eL/4$. Thus
\[
 \left|\sum_{n\leq y}\Lambda(n)n^{-a-it}\right|
 \leq\frac{eL}{2}
 \qquad(\sigma_1\leq a\leq\sigma_2).
\]

By \eqref{eq:prime-power-remainder}, it remains to control $R_y(a+it)$. Since $a\geq1-(1-e)\eta\geq(1+e)/2$, the geometric-series identity gives
\begin{align*}
 |R_y(a+it)|
 &\leq\sum_{q\leq y}(\log q)\sum_{j\geq2}q^{-ja}
 =\sum_{q\leq y}\frac{(\log q)q^{-2a}}{1-q^{-a}}\\
 &\leq\frac1{1-2^{-(1+e)/2}}
        \sum_{q\leq y}\frac{\log q}{q^{1+e}}\\
 &\leq\frac1{1-2^{-(1+e)/2}}
        \sum_{n\geq2}\frac{\log n}{n^{1+e}}
 =O_e(1).
\end{align*}
This convergent series depends only on $e$. Increasing $X_0(d,e)$ once more gives
\[
 \left|\frac{Z_y'(a+it)}{Z_y(a+it)}\right|\leq eL
 \qquad(\sigma_1\leq a\leq\sigma_2).
\]
Finally,
\[
 \frac{d}{da}\log|Z_y(a+it)|
 =
 \Re\frac{Z_y'(a+it)}{Z_y(a+it)}.
\]
Integration from $\sigma_2$ to $\sigma_1$ yields
\[
 \log\frac{|Z_y(\sigma_1+it)|}{|Z_y(\sigma_2+it)|}
 \leq eL(\sigma_2-\sigma_1).
\]
Adding $(\sigma_1-\sigma_2)L$ and exponentiating proves the assertion.
\end{proof}

\subsection{The contour and its contributions}

Throughout this subsection, use the parameters of Theorem~\ref{thm:strip}:
\[
 L=\log x,\qquad S=\sqrt{L\log L},\qquad
 y=e^S,\qquad \log y=S,\qquad u=L/S.
\]
Fix $\epsilon=1/1000$. All sufficiently large thresholds below depend only on $\delta$ and are uniform for $x^{3/4}\leq h\leq x$.

\medskip
\noindent\textbf{Parameter preparation.}
Put $a_0=1-\alpha$. Proposition~\ref{prop:location} gives
\begin{align}\label{eq:parameters}
 a_0
 &=\frac{\log(u\log(u+1))+O(1)}S
 =O\!\left(\frac{\log(u+1)}S\right)=O(1/u),
 \notag\\
 y^{a_0}
 &=\exp\!\left(\log(u\log(u+1))+O(1)\right)
 \asymp u\log(u+1).
\end{align}
Here $a_0>0$ for large $x$, and the first line uses $(\log L)/S=1/u$ and $\log(u+1)=O(\log L)$. Thus, for sufficiently large $x$,
\[
 y\leq x^\epsilon,\qquad a_0\leq\epsilon\delta,\qquad
 \alpha>1-\epsilon,\qquad S\geq(\log L)^2.
\]
Define
\begin{equation}\label{eq:heights}
 D=\frac8{\epsilon L},\qquad
 t_0=Dy^{a_0},\qquad
 H=\frac{x e^{L^{9/10}}}{h},\qquad
 T=H+10\sqrt y.
\end{equation}
The key size estimates are
\begin{align*}
 t_0&\asymp\frac{u\log(u+1)}L
       =\frac{\log(u+1)}S,\\
 e^{L^{9/10}}&\leq H\leq x^{1/4}e^{L^{9/10}}
       =x^{1/4+L^{-1/10}},\\
 2\leq T&\leq x^{1/4+L^{-1/10}}+10x^{S/(2L)}
       \leq x^{3/10}.
\end{align*}
In particular, $H\leq x$. These estimates are uniform throughout the stated range of $h$.

\begin{proof}[Proof of Theorem~\ref{thm:strip}]
\noindent\textbf{Step 1. The right envelope.}
We build a finite contour on which the local shift applies.

Put $b=(1+\epsilon)/2$, and let $\mathcal Z_T$ be the multiset of nontrivial zeros
\[
 \rho=\beta+i\gamma,\qquad \beta\geq\frac12,\qquad |\gamma|\leq T,
\]
counted with multiplicity. By Proposition~\ref{prop:zero-density},
$\#\mathcal Z_T=N(1/2,T)\ll T^{6/5}<\infty$, so this multiset is finite.
For $\rho\in\mathcal Z_T$, write $\nu=1-\beta$; the fixed-strip hypothesis gives $\delta\leq\nu\leq1/2$. Define
\begin{align*}
 \Gamma_\rho^{\mathrm v}
 &=\{1-(1-\epsilon)\nu+it:|t-\gamma|\leq10y^\nu\},\\
 \Gamma_\rho^\pm
 &=\{1-(1-\epsilon)v+i(\gamma\pm10y^v):\nu\leq v\leq1/2\},\\
 \Gamma_\rho
 &=\Gamma_\rho^{\mathrm v}\cup\Gamma_\rho^+\cup\Gamma_\rho^-.
\end{align*}
The curves $\Gamma_\rho^\pm$ join the endpoints of $\Gamma_\rho^{\mathrm v}$ to the line $\Re(s)=b$. No zero with $|\gamma|>T$ can contribute at a height $|t|\leq H$, since every point of its attached path satisfies $|t-\gamma|\leq10\sqrt y$.

Define the central path by
\begin{align*}
 \Gamma_0^{\mathrm v}
 &=\{\alpha+it:|t|\leq t_0\},\\
 \Gamma_0^\pm
 &=\{1-v\pm iDy^v:a_0\leq v\leq(1-\epsilon)/2\},\\
 \Gamma_0
 &=\Gamma_0^{\mathrm v}\cup\Gamma_0^+\cup\Gamma_0^-.
\end{align*}
The curves $\Gamma_0^\pm$ start at $\alpha\pm it_0$ and end on $\Re(s)=b$. Let
\[
 \Gamma_{\mathrm{rem}}=\{b+it:-H\leq t\leq H\}
\]
be the remainder vertical segment. The whole central path lies within $|t|<H$, since
\[
 t_0=Dy^{a_0}\leq Dy^{(1-\epsilon)/2}
 =\exp\!\left(\frac{1-\epsilon}{2}S+O(\log L)\right)
 <e^{L^{9/10}}\leq H.
\]
At each height $-H\leq t\leq H$, define
\[
 \sigma_\Gamma(t)=\max\left\{\sigma\in\mathbb R:
 \sigma+it\in\Gamma_{\mathrm{rem}}\cup\Gamma_0
 \cup\bigcup_{\rho\in\mathcal Z_T}\Gamma_\rho\right\},
 \qquad -H\leq t\leq H,
\]
and set
\[
 \Gamma(t)=\sigma_\Gamma(t)+it,\qquad
 \Gamma=\{\Gamma(t):-H\leq t\leq H\},
\]
oriented upward. Thus $\Gamma$ is the right envelope, and is not itself a closed contour. The finite collection of candidate paths joins $\Gamma_{\mathrm{rem}}$ continuously; their explicit constant and logarithmic graphs make the envelope continuous and piecewise smooth. All zero paths lie strictly to the left of $\alpha$, since
$a_0\leq\epsilon\delta<(1-\epsilon)\nu$. Hence
\[
 \frac{1+\epsilon}{2}\leq\Re(s)\leq\alpha
 \qquad(s\in\Gamma),
\]
and $\Gamma_0^{\mathrm v}\subseteq\Gamma$. To distinguish the main segment from its tails, put
\[
 \Gamma_{\mathrm c}=\{\alpha+it:|t|\leq1/S\},\qquad
 \Gamma_{\mathrm{tail}}=\Gamma_0^{\mathrm v}\setminus\Gamma_{\mathrm c}.
\]
Here $t_0S\asymp\log(u+1)\to\infty$, so $\Gamma_{\mathrm c}\subseteq\Gamma_0^{\mathrm v}$ for large $x$. The part supplied only by the remainder line is
\[
 \Gamma_{\mathrm{rem}}^*
 =\Gamma\setminus\left(\Gamma_0\cup
       \bigcup_{\rho\in\mathcal Z_T}\Gamma_\rho\right)
 \subseteq\Gamma_{\mathrm{rem}}.
\]
Finally, write $\sigma_\pm=\sigma_\Gamma(\pm H)$ and define the horizontal connectors
\[
 C_\pm=\{\sigma\pm iH:\sigma_\pm\leq\sigma\leq\alpha\}.
\]
We orient $C_-$ from $\alpha-iH$ to $\Gamma(-H)$, and $C_+$ from $\Gamma(H)$ to $\alpha+iH$. Integrals over subsets of $\Gamma$ inherit its upward orientation; integrals against $|ds|$ do not depend on orientation. In particular, $\Gamma\cap\Gamma_\rho$ and $\Gamma\cap\Gamma_0^\pm$ denote precisely the portions selected by the envelope.

Let $s=\sigma+it$ lie on or to the right of $\Gamma$, with $\sigma<\alpha$, and put
\[
 \eta=\max\!\left(\delta,\frac{1-\sigma}{1-\epsilon}\right).
\]
Then $\delta\leq\eta\leq1/2$ and
$\sigma\geq1-(1-\epsilon)\eta$. We claim that
\begin{equation}\label{eq:local-box}
 \Re(z)>1-\eta,\qquad
 |\Im(z)-t|<10y^\eta
\end{equation}
contains no zeta zero. If $\eta=\delta$, this is the fixed-strip hypothesis. If $\eta>\delta$ and a zero $\rho=1-\nu+i\gamma$ lay in \eqref{eq:local-box}, then $\nu<\eta$. When
$|t-\gamma|\leq10y^\nu$, its vertical path would have real part
$1-(1-\epsilon)\nu>\sigma$ at height $t$. Otherwise,
$|t-\gamma|=10y^v$ for some $\nu<v<\eta$, and its curved path would have real part
$1-(1-\epsilon)v>\sigma$. Both conclusions contradict the envelope. Zeros with real part below $1/2$ cannot lie in \eqref{eq:local-box}.

We next verify the height condition needed for the local shift. Put
$t_*=Dy^{(1-\epsilon)/2}$, the absolute height of the endpoints of
$\Gamma_0^\pm$. Recall that $\sigma\geq\sigma_\Gamma(t)$ and $\sigma<\alpha$.
If $|t|\leq t_0$, the central vertical segment gives
$\sigma_\Gamma(t)=\alpha$, so no such point exists.
For $t_0<|t|\leq t_*$, the central curve has parameter
\[
 v_t=\frac{\log(|t|/D)}{\log y},\qquad
 a_0<v_t\leq\frac{1-\epsilon}{2},\qquad |t|=Dy^{v_t}.
\]
Its point at height $t$ has real part $1-v_t$. Since the envelope is at
least as far right as this candidate point,
\[
 \sigma\geq\sigma_\Gamma(t)\geq1-v_t,
 \qquad
 Dy^{1-\sigma}\leq Dy^{v_t}=|t|.
\]
For $t_*<|t|\leq H$, we instead use
$\sigma\geq\sigma_\Gamma(t)\geq b=(1+\epsilon)/2$, which gives
\[
 Dy^{1-\sigma}\leq Dy^{1-b}
 =Dy^{(1-\epsilon)/2}=t_*<|t|.
\]
Thus in every applicable case,
\[
 |t|\geq Dy^{1-\sigma}.
\]
Finally, $\sigma<\alpha<1$ implies $y^{1-\sigma}\geq1$.
Using $D=8/(\epsilon L)$, we obtain
\[
 |t|\geq\frac8{\epsilon L}y^{1-\sigma}
 \geq\frac4{\epsilon L}\bigl(y^{1-\sigma}+1\bigr).
\]
Lemma~\ref{lem:local-shift}, applied with $d=\delta$, $e=\epsilon$ and
$\sigma_2=\alpha$, now gives
\begin{align}
 |x^s Z_y(s)|
 &\leq
 x^{-(1-\epsilon)(\alpha-\sigma)}
 |x^{\alpha+it}Z_y(\alpha+it)|
 \notag\\
 &\leq
 W x^{-(1-\epsilon)(\alpha-\sigma)}.
 \label{eq:shift-bound}
\end{align}
For $\sigma=\alpha$, the second bound follows from the triangle inequality for the Euler product. The same reasoning applies on $C_-$ and $C_+$, which lie on or to the right of the envelope at heights $-H$ and $H$.

\medskip
\noindent\textbf{Step 2. Paths associated with zeros.}
We sum the absolute contributions of all retained zero paths.

For $\rho=1-\nu+i\gamma\in\mathcal Z_T$, \eqref{eq:shift-bound} gives
\[
 \int_{\Gamma\cap\Gamma_\rho^{\mathrm v}} |x^sZ_y(s)|\,|ds|
 \leq
 20W x^{(1-\epsilon)a_0}
 \left(\frac{y}{x^{(1-\epsilon)^2}}\right)^\nu.
\]
On either curved path, write $s_\pm(v)=1-(1-\epsilon)v+i(\gamma\pm10y^v)$, where $\nu\leq v\leq1/2$. Then
\[
 |s_\pm'(v)|
 =\sqrt{(1-\epsilon)^2+100S^2y^{2v}}
 \leq1+10Sy^v\ll Sy^v.
\]
Put $A=(1-\epsilon)^2L-S\asymp L>0$. Applying \eqref{eq:shift-bound} on the selected portion and integrating its nonnegative majorant over the full parameter interval, we obtain
\begin{align*}
 \int_{\Gamma\cap\Gamma_\rho^\pm}|x^sZ_y(s)|\,|ds|
 &\ll W x^{(1-\epsilon)a_0}S
       \int_\nu^{1/2}x^{-(1-\epsilon)^2v}y^v\,dv\\
 &\leq W x^{(1-\epsilon)a_0}S
       \int_\nu^\infty e^{-Av}\,dv
 =W x^{(1-\epsilon)a_0}\frac{S}{A}e^{-A\nu}.
\end{align*}
Thus
\[
 \int_{\Gamma\cap\Gamma_\rho^\pm} |x^sZ_y(s)|\,|ds|
 \ll
 W x^{(1-\epsilon)a_0}
 \frac{S}{(1-\epsilon)^2L-S}
 \left(\frac{y}{x^{(1-\epsilon)^2}}\right)^\nu.
\]
Since
$(1-\epsilon)a_0\leq\epsilon\nu$ and $S/L\leq\epsilon$, all three pieces for this zero contribute at most
\[
 \int_{\Gamma\cap\Gamma_\rho}|x^sZ_y(s)|\,|ds|
 \leq CWx^{-(1-4\epsilon)\nu}
 \leq CWx^{-(99/100)\nu}.
\]

We now sum these bounds using zero density. Enumerate the zeros in
$\mathcal Z_T$ as $\rho_j=1-\nu_j+i\gamma_j$, counted with multiplicity,
where $1\leq j\leq M$ and $\delta\leq\nu_j\leq1/2$.
By Proposition~\ref{prop:zero-density} and $T\leq x^{3/10}$,
\[
 \#\{j:\nu_j\leq v\}
 =N(1-v,T)
 \leq CT^{(12/5)v}
 \leq Cx^{(18/25)v}
 \qquad(\delta\leq v\leq1/2).
\]
For $v>1/2$, the same upper bound holds by using the count at $v=1/2$,
since all $\nu_j\leq1/2$.

Put $m_0=\lfloor\delta L\rfloor$. Since $\nu_j\geq\delta$, we have
$\nu_jL\geq\delta L\geq m_0$, so $\lfloor\nu_jL\rfloor\geq m_0$.
Thus we group the indices according to $m\leq\nu_jL<m+1$,
and only integers $m\geq m_0$ need be considered. The preceding estimate gives
\[
 \#\{j:m\leq\nu_jL<m+1\}
 \leq Cx^{(18/25)(m+1)/L}
 =Ce^{(18/25)(m+1)},
\]
while each weight in this group satisfies
$x^{-(99/100)\nu_j}\leq e^{-(99/100)m}$.
Since $99/100-18/25=27/100$, summing the resulting geometric series yields
\begin{align}
 \sum_{\rho\in\mathcal Z_T}x^{-(99/100)\nu}
 &=\sum_{j=1}^M x^{-(99/100)\nu_j}\notag\\
 &\leq Ce^{18/25}\sum_{m\geq m_0}e^{-(27/100)m}
 =\frac{Ce^{18/25}e^{-(27/100)m_0}}{1-e^{-27/100}}
 \ll x^{-27\delta/100}.
 \label{eq:weighted-count}
\end{align}
Here the last inequality uses $m_0>\delta L-1$. The estimate also holds
when $\mathcal Z_T$ is empty.

By Proposition~\ref{prop:mass}, $W\ll\Psi(x,y)L$. Increasing the threshold so that
$L\leq x^{14\delta/100}$, we obtain
\begin{equation}\label{eq:zero-path-bound}
 \sum_{\rho\in\mathcal Z_T}\int_{\Gamma\cap\Gamma_\rho}
 |x^sZ_y(s)|\,|ds|
 \ll_\delta
 \Psi(x,y)x^{-13\delta/100}.
\end{equation}
The shift estimate is used only on $\Gamma\cap\Gamma_\rho$. We then integrate its nonnegative majorant over each full candidate parameter interval. Summing these bounds is valid even if different paths overlap, since any resulting overcount only enlarges the upper bound.

\medskip
\noindent\textbf{Step 3. The remainder line and horizontal connectors.}
We control the portions outside the central and zero paths.

On $\sigma=(1+\epsilon)/2$, the inequalities
$\alpha>1-\epsilon$ and \eqref{eq:shift-bound} imply
\[
 \int_{\Gamma_{\mathrm{rem}}^*}|x^sZ_y(s)|\,|ds|
 \leq
 2HWx^{-1/2+2\epsilon}
 \ll \Psi(x,y)x^{-19/100}.
\]
For the last inequality, use $H\leq x^{3/10}$,
$W\ll\Psi(x,y)L$, and absorb $L$ into the difference between the exponents
$-1/5+2\epsilon$ and $-19/100$.

On each horizontal connector $C_\pm$, integration of \eqref{eq:shift-bound} in $\sigma$ gives
\begin{equation}\label{eq:horizontal-mass}
 \int_{C_\pm}|x^sZ_y(s)|\,|ds|
 \ll\frac WL
 \ll\frac{\Psi(x,y)}{\sqrt u}.
\end{equation}
Indeed,
$\sqrt{LS}/L=1/\sqrt u$. At height $|t|=H$,
\[
 \left|\frac{(x+h)^s-x^s}{s}\right|
 \ll\frac{|x^s|}{H},
\]
because $h\leq x$ and the real parts lie in a fixed bounded interval. Hence both horizontal contributions to the counting integral are
\begin{equation}\label{eq:horizontal-count}
 \left|\frac1{2\pi i}
 \left(\int_{C_-}+\int_{C_+}\right)
 Z_y(s)\frac{(x+h)^s-x^s}{s}\,ds\right|
 \ll\frac{\Psi(x,y)}{H\sqrt u}.
\end{equation}

\medskip
\noindent\textbf{Step 4. The central tails and curves.}
We retain the decay on the central curves in addition to the local shift.

On the vertical segment with $1/S\leq|t|\leq t_0$,
Proposition~\ref{prop:decay} and \eqref{eq:parameters} give
\[
 |Z_y(\alpha+it)|
 \ll Z_y(\alpha)
 \exp\!\left(-\frac{cu}{(\log(u+1))^2}\right).
\]
The segment has length $O(\log(u+1)/S)$, so
\begin{align}\label{eq:vertical-tail}
 \int_{\Gamma_{\mathrm{tail}}}|x^sZ_y(s)|\,|ds|
 &=\int_{1/S<|t|\leq t_0}
 |x^{\alpha+it}Z_y(\alpha+it)|\,dt
 \notag\\
 &\ll
 W\frac{\log(u+1)}S
 \exp\!\left(-\frac{cu}{(\log(u+1))^2}\right)
 =
 o\!\left(\frac{\Psi(x,y)}u\right).
\end{align}
The final comparison uses $W\ll\Psi(x,y)L$, $L/S=u$, and
\[
 u^2\log(u+1)
 \exp\!\left(-\frac{cu}{(\log(u+1))^2}\right)
 \longrightarrow0.
\]

For the selected portions $\Gamma\cap\Gamma_0^\pm$, use the parametrizations
\[
 s_\pm(v)=1-v\pm iDy^v,\qquad
 a_0\leq v\leq\frac{1-\epsilon}{2}.
\]
By \eqref{eq:parameters}, the ratio $t_0/a_0$ is bounded below by a positive constant. Thus
\[
 \frac{t^2}{t^2+a_0^2}\gg1
 \qquad(|t|\geq t_0).
\]
The curve's whole height is less than $\exp(S^{3/2-1/10})$, so
Proposition~\ref{prop:decay}, with $r=1/10$, contributes an additional factor $e^{-cu}$ at $\alpha+it$.

Also,
\[
 |s_\pm'(v)|\leq2DSy^v
\]
for large $x$, since $DSy^{a_0}\asymp\log(u+1)\to\infty$.
To specify where the local shift is used, put
\[
 I=\left[a_0,\frac{1-\epsilon}{2}\right],\qquad
 E_\pm=\{v\in I:s_\pm(v)\in\Gamma\},\qquad
 t_\pm(v)=\pm Dy^v.
\]
For $v\in E_\pm$, the point $s_\pm(v)$ lies on the actual contour, and
$\alpha-\Re s_\pm(v)=v-a_0$. Thus the first inequality in
\eqref{eq:shift-bound} and the preceding decay estimate give
\begin{align*}
 |x^{s_\pm(v)}Z_y(s_\pm(v))|
 &\leq x^{-(1-\epsilon)(v-a_0)}
       |x^{\alpha+it_\pm(v)}Z_y(\alpha+it_\pm(v))|\\
 &\ll We^{-cu}x^{-(1-\epsilon)(v-a_0)},
 \qquad v\in E_\pm.
\end{align*}
Combining this with the arc-length bound, we obtain
\[
 |x^{s_\pm(v)}Z_y(s_\pm(v))|\,|s_\pm'(v)|
 \ll We^{-cu}DSx^{-(1-\epsilon)(v-a_0)}y^v,
 \qquad v\in E_\pm.
\]
The parametrizations are injective, so
\begin{align*}
 \int_{\Gamma\cap\Gamma_0^\pm}|x^sZ_y(s)|\,|ds|
 &=\int_{E_\pm}|x^{s_\pm(v)}Z_y(s_\pm(v))|\,|s_\pm'(v)|\,dv\\
 &\ll We^{-cu}DS\int_{E_\pm}
          x^{-(1-\epsilon)(v-a_0)}y^v\,dv\\
 &\leq We^{-cu}DS\int_{a_0}^{(1-\epsilon)/2}
          x^{-(1-\epsilon)(v-a_0)}y^v\,dv.
\end{align*}
The last step enlarges only the integral of the nonnegative majorant;
no shift estimate is required on $I\setminus E_\pm$.
Substituting $w=v-a_0$ and extending this majorant integral to infinity gives
\begin{align}
 \int_{\Gamma\cap\Gamma_0^\pm}
 |x^sZ_y(s)|\,|ds|
 &\ll
 W e^{-cu}DSy^{a_0}
 \int_0^\infty
 e^{-((1-\epsilon)L-S)w}\,dw
 \notag\\
 &=
 W e^{-cu}
 \frac{DSy^{a_0}}{(1-\epsilon)L-S}
 \notag\\
 &\ll
 W e^{-cu}\frac{\log(u+1)}L
 =
 o\!\left(\frac{\Psi(x,y)}u\right).
 \label{eq:central-curve}
\end{align}
The denominator is positive and comparable with $L$. The same positive majorant bounds either sign of the curve. Local shifting is used only on selected portions of $\Gamma$, while integration over the whole parameter range enlarges the bound.

\medskip
\noindent\textbf{Step 5. The main term and uniform error.}
We assemble the contour estimates without requiring $h/x$ to tend to zero.

Write the counting integrand as
\[
 \mathcal F(s)=Z_y(s)\frac{(x+h)^s-x^s}{s}.
\]
The finite Euler product is holomorphic for $\Re(s)>0$, and $\mathcal F$ has no pole in the region of deformation. With the connector orientations specified in Step~1, Cauchy's theorem gives the exact identity
\[
 \int_{\alpha-iH}^{\alpha+iH}\mathcal F(s)\,ds
 =\int_{C_-}\mathcal F(s)\,ds
  +\int_\Gamma\mathcal F(s)\,ds
  +\int_{C_+}\mathcal F(s)\,ds.
\]
Since $\Gamma_{\mathrm c}\subseteq\Gamma$, we split the middle integral as
\[
 \int_\Gamma\mathcal F(s)\,ds
 =\int_{\Gamma_{\mathrm c}}\mathcal F(s)\,ds
  +\int_{\Gamma\setminus\Gamma_{\mathrm c}}\mathcal F(s)\,ds.
\]

Write $r=h/x$. For $0\leq r\leq1$,
\begin{equation}\label{eq:kernel-identity}
 \frac{(x+h)^s-x^s}{s}
 =
 x^s\int_0^r(1+v)^{s-1}\,dv.
\end{equation}
On the contours under consideration, this gives
\[
 \left|\frac{(x+h)^s-x^s}{s}\right|
 \ll r|x^s|.
\]
On $\Gamma_{\mathrm c}$, one further integration gives
\[
 \frac{(x+h)^s-x^s}{s}
 =
 x^s\bigl(r+O(r^2|s-1|)\bigr).
\]
This estimate remains valid for $r=1$.

Multiplying this expansion by $Z_y(s)$ and integrating separates the
central contribution into a main integral and a weighted error:
\begin{align*}
 \frac1{2\pi i}\int_{\Gamma_{\mathrm c}}\mathcal F(s)\,ds
 &=\frac r{2\pi i}\int_{\Gamma_{\mathrm c}}x^sZ_y(s)\,ds+E_{\mathrm c},\\
 |E_{\mathrm c}|
 &\ll r^2\int_{\Gamma_{\mathrm c}}
       |s-1|\,|x^sZ_y(s)|\,|ds|.
\end{align*}
Since $S=\log y$, the segment $\Gamma_{\mathrm c}$ is exactly the one in
Proposition~\ref{prop:central}. Its main-integral and weighted absolute-integral
estimates give, respectively,
\begin{align*}
 \frac1{2\pi i}\int_{\Gamma_{\mathrm c}}x^sZ_y(s)\,ds
 &=\alpha\Psi(x,y)
   \left(1+O\!\left(\frac1u+\frac Sy\right)\right),\\
 |E_{\mathrm c}|
 &\ll r^2\Psi(x,y)\frac{\log(u+1)}S.
\end{align*}
Consequently,
\[
 \frac1{2\pi i}\int_{\Gamma_{\mathrm c}}\mathcal F(s)\,ds
 =
 \alpha r\Psi(x,y)
 \left(1+O\!\left(\frac1u+\frac Sy\right)\right)
 +
 O\!\left(
 r^2\Psi(x,y)\frac{\log(u+1)}S
 \right).
\]
To compare both errors with the main term $\alpha r\Psi(x,y)$, note that
\[
 \frac{S/y}{1/u}=\frac{uS}{y}=\frac Ly\longrightarrow0,
 \qquad
 \frac{|E_{\mathrm c}|}{\alpha r\Psi(x,y)}
 \ll\frac r\alpha\frac{\log(u+1)}S
 \ll\frac1u.
\]
Here $L/y\to0$ follows from $y=e^S$ and $S/\log L\to\infty$;
the second estimate uses $r\leq1$, $\alpha>1/2$, and
$\log(u+1)/S=O(1/u)$. Hence, uniformly for the stated range of $h$,
\[
 \frac1{2\pi i}\int_{\Gamma_{\mathrm c}}\mathcal F(s)\,ds
 =\alpha r\Psi(x,y)\left(1+O(1/u)\right).
\]

The remaining portion of $\Gamma$ is covered by the paths estimated in Steps~2--4. Thus
\begin{align*}
 \left|\int_{\Gamma\setminus\Gamma_{\mathrm c}}\mathcal F(s)\,ds\right|
 &\leq\int_{\Gamma\setminus\Gamma_{\mathrm c}}|\mathcal F(s)|\,|ds|\\
 &\ll r\Bigg(
  \sum_{\rho\in\mathcal Z_T}\int_{\Gamma\cap\Gamma_\rho}
        |x^sZ_y(s)|\,|ds|
  +\int_{\Gamma_{\mathrm{rem}}^*}|x^sZ_y(s)|\,|ds|\\
 &\hspace{2.5cm}
  +\int_{\Gamma_{\mathrm{tail}}}|x^sZ_y(s)|\,|ds|
  +\sum_{\pm}\int_{\Gamma\cap\Gamma_0^\pm}
        |x^sZ_y(s)|\,|ds|\Bigg)\\
 &=o\!\left(\frac{r\Psi(x,y)}u\right).
\end{align*}
Overlaps among the covering paths cause no problem because all terms on the right are nonnegative. By \eqref{eq:horizontal-count}, the contribution of $C_-\cup C_+$ divided by $r\Psi(x,y)$ is at most
\[
 \frac{Cx}{hH\sqrt u}
 =
 \frac{Ce^{-L^{9/10}}}{\sqrt u}
 =
 o(1/u).
\]
Finally, the error in Proposition~\ref{prop:perron}, divided by
$r\Psi(x,y)$, is at most
\[
 C\frac{x}{\Psi(x,y)}L e^{-L^{9/10}}
 \leq
 CL\exp(3S-L^{9/10})
 =
 o(1/u),
\]
where \eqref{eq:long-density} was used.

Every threshold was chosen uniformly before $h$ ranged over
$[x^{3/4},x]$. We have proved the stated relative estimate. Increase
$X_\delta$ so that its relative error is at most $1/2$. Then
$\alpha>1/2$ and \eqref{eq:long-density} imply
\[
 \Psi(x+h,y)-\Psi(x,y)
 \geq\frac12\alpha\frac hx\Psi(x,y)
 \geq\frac h4e^{-3S}.
\]
\end{proof}

\section{Arithmetic transfer and the final bound}\label{sec:arithmetic}

In this section, we transfer the density conclusion of Theorem~\ref{thm:strip} to actual integer circuits, using the established certificate and marked-slice transfer of \cite{Zhang2026}.

We follow the straight-line program convention of de Melo and Svaiter
\cite{deMeloSvaiter1996} and the notation of
\cite[Definition~1.1]{Zhang2026}.

\begin{defn}[Arithmetic complexity]\label{defn:complexity}
An integer straight-line program begins with $1$ and $2$; each subsequent
gate adds, subtracts or multiplies two available values. Values computed
earlier may be reused. The length is the number of gates, independent of
the bit lengths of their operands. For $n\in\mathbb Z$, let $\tau(n)$
denote the minimum length of such a program producing $n$. For an integer
$k\geq2$, define
\[
 \mu(k)=\min_{m\in\mathbb Z_{>0}}\tau(mk!).
\]
\end{defn}

Only these three binary operations are counted as gates. Other integer
constants must be constructed, and division, modular reduction, gcd,
determinant and group operations must be implemented by the allowed
operations whenever they are used in a circuit.
Starting from $1$ alone changes $\tau$ by at most one operation.
Allowing negative nonzero multipliers in the definition of $\mu$ changes
the minimum by at most a constant, since the sign can be reversed using
a bounded number of arithmetic operations.

\subsection{The cited family and arithmetic inputs}

\begin{defn}[The marked quintic family]\label{defn:family}
For integers $H\geq1$ and
$\boldz=(z_1,\ldots,z_5)\in\{0,\ldots,H-1\}^5$, put
\begin{align}
 t_1&=1+3z_1,\qquad t_i=3z_i\quad(2\leq i\leq5),
 \label{eq:family-parameters}\\
 f_{\boldz}(X)
 &=(1+3^{10}t_5)X^5
   +\sum_{i=1}^4 3^{2i}t_iX^i
   -\sum_{i=1}^5t_i.
 \label{eq:family}
\end{align}
Let $C_{\boldz}$ be the smooth completion of
$Y^2=f_{\boldz}(X)$, and set
\[
 P_{\boldz}=(1/9,1/243),\qquad
 D_{\boldz}=[P_{\boldz}-\infty].
\]
These are the family and marked class of
\cite[(7)--(9)]{Zhang2026}. The same polynomial formulas define the family for $\boldz\in\F_p^5$.
For a prime $p\geq3$, write
\[
 x_p=p^2-\frac12p^{3/2},\qquad
 I_p=[x_p,x_p+p^{3/2}],\qquad
 S_p=\sqrt{\log x_p\log\log x_p}.
\]
\end{defn}

For the permitted integer vectors, smoothness, good reduction at $3$ and infinite order of $D_{\boldz}$ are proved in
\cite[Lemma~4.1]{Zhang2026}. We use the following two consequences as cited results.

\begin{prop}[Charged positive certificates, {\cite[Proposition~5.1]{Zhang2026}}]\label{prop:certificate}
There is an absolute constant $C_0\geq1$ such that, for every pair of integers
$N,H\geq1$ and every permitted vector $\boldz$, there is a positive integer
$Z(N,\boldz)$ with
\[
 \tau(Z(N,\boldz))
 \leq
 C_0\left(
 1+\lceil\log_2N\rceil+\lceil\log_2(H+1)\rceil
 \right).
\]
Its circuit is fixed from the rational data $N,\boldz$, independently of a tested prime. Its output is divisible by $2$ and $3$. For every prime $p\geq5$, its output is divisible by $p$ if either
\begin{enumerate}
\item $f_{\boldz}$ modulo $p$ fails to be a squarefree quintic; or
\item $f_{\boldz}$ modulo $p$ is a squarefree quintic and
$[N](D_{\boldz}\bmod p)=0$ in
$\Jac(C_{\boldz}\bmod p)(\F_p)$.
\end{enumerate}
The bound is for circuits in Definition~\ref{defn:complexity}.
\end{prop}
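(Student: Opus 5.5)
The plan is to realize $[N]D_{\boldz}$ by a straight-line program over $\Z$ that runs Cantor's algorithm on Mumford representations, with all denominators kept as separate integer values, and to multiply together every quantity whose vanishing modulo $p$ signals either bad reduction or arrival at the neutral element. Start from $D_{\boldz}=(X-1/9,\,1/243)$, so that every rational number that occurs is a quotient of integers that the circuit builds explicitly.

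First, build the inputs. The coordinates $z_i$ cost $O(\lceil\log_2(H+1)\rceil)$ gates each, by binary expansion with Horner's rule. The constants $3^{2i}$ and $3^{10}$ cost $O(1)$ gates. The coefficients of $f_{\boldz}$ in \eqref{eq:family} then cost $O(1)$ further gates. Next, form the discriminant of $f_{\boldz}$ (including its leading coefficient $1+3^{10}t_5$) as a fixed integer polynomial in the coefficients. This costs $O(1)$ gates, and it is divisible by $p\geq5$ exactly when condition (1) holds.

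Second, compute $[N]D_{\boldz}$ by double-and-add along the binary expansion of $N$. This takes $O(\lceil\log_2N\rceil)$ group operations. Each operation is implemented in a projective, division-free form of Cantor composition and reduction: the coefficients of the Mumford pair $(u,v)$ are stored as integer numerators over a common integer denominator, and each step is a fixed polynomial map with $O(1)$ gates.

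Cantor's algorithm branches according to the degrees and gcds that occur. We fix the branch at construction time by running the rational computation, which depends only on $N$ and $\boldz$; this is why the circuit is independent of the tested prime. Following the failure-recording construction \cite[Lemma~3.2]{Zhang2026}, every quantity whose nonvanishing the chosen branch requires over $\Q$ is recorded as a factor of the output. These include resultants of $u$-polynomials, leading coefficients used in reductions, and the common denominators. Each such factor is a nonzero integer, because it is nonzero in the actual rational computation and $D_{\boldz}$ has infinite order by \cite[Lemma~4.1]{Zhang2026}.

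Third, verify divisibility. Let $p\geq5$ be a prime that divides neither the discriminant nor any recorded factor. Then every step of the integer computation reduces modulo $p$ to a valid Cantor step with the same branch, so the reduction of the final Mumford pair represents $[N](D_{\boldz}\bmod p)$. Over $\Q$ the final $u$ has positive degree, since $[N]D_{\boldz}\neq0$. Hence $[N](D_{\boldz}\bmod p)=0$ forces a degree drop modulo $p$, that is, $p$ divides the leading numerator of $u$. We include that leading numerator as a factor too. This gives the dichotomy: if $p$ divides none of the factors, then neither (1) nor (2) holds. Equivalently, (1) or (2) forces $p$ to divide some factor.

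Finally, take $Z(N,\boldz)=6\,Q^2$, where $Q$ is the product of all recorded factors. This $Z$ is positive, divisible by $2$ and $3$, and costs $O(\log N+\log(H+1)+1)$ gates in total.

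The main obstacle is the second step. One must show that the projective, division-free Cantor formulas, with their branch frozen over $\Q$, really commute with reduction modulo every good $p$. In particular, whenever a recorded nonvanishing condition survives modulo $p$, the reduced output must be the true reduced multiple and not an artifact of an invalid specialization. I would handle this by checking each step separately: the recorded resultants force the same gcd structure modulo $p$, and the recorded leading coefficients force the same degrees modulo $p$. Given these two facts, Cantor's algorithm is the same algebraic computation over $\F_p$ as over $\Q$.
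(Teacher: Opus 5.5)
Your proposal is correct and follows essentially the construction the paper cites from \cite[Proposition~5.1]{Zhang2026}. Both arguments expand Cantor composition and reduction into scalar gates along at most $2\lfloor\log_2N\rfloor$ group operations, freeze the branches from the rational computation, record every rationally nonzero test and denominator as a factor via \cite[Lemma~3.2]{Zhang2026}, use the infinite order of $D_{\boldz}$ to make the final class nonzero, and square the nonzero product. The only differences are cosmetic: you run the frozen program over $\F_p$ on $P_{\boldz}\bmod p$ and use uniqueness of reduced Mumford representatives, whereas the cited construction also appeals to compatibility of the relative Jacobian with good reduction. Also, since your final $u$ is monic, its ``leading numerator'' is just the recorded common denominator, so the cleaner statement of the final step is the contrapositive: no recorded factor divisible by $p$ gives a reduced pair of positive degree, hence a nonzero class.
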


The charge in Proposition~\ref{prop:certificate} includes all five literal coordinates, scalar scheduling, bounded-degree coefficient arithmetic, and every retained inverse of a rationally nonzero branch test. Complete divisor arithmetic is expanded into scalar instructions in
\cite[Proposition~5.1]{Zhang2026}; rational failures are then recorded by
\cite[Lemma~3.2]{Zhang2026}, and the nonzero integer output is squared. The scalar schedule uses at most $2\lfloor\log_2N\rfloor$ group additions before that expansion. These statements supply no free scalar register, division, gcd, determinant, zero-test or group-operation gate.

The algebraic references behind the cited construction are complete composition, reduction and unique reduced representatives in
\cite[Theorems~10.3.14, 10.3.21 and~10.4.1]{Gal12}, the general reduction algorithm in \cite[Section~2, Algorithm~1]{KS06}, and compatibility of relative Jacobians with good reduction in
\cite[Section~8, Theorem~8.1, and the proof of Corollary~12.3]{Milne21}. Only the general reduction algorithm from \cite{KS06} is relevant; its special endomorphism families are not used.

\begin{prop}[The full marked-slice probability, {\cite[Lemma~6.1]{Zhang2026}}]\label{prop:slice}
Let $p>8100$ be prime and let $\cA\subseteq I_p\cap\Z$. For uniform
$\boldz\in\F_p^5$, with $f_{\boldz}$ given by
\eqref{eq:family-parameters}--\eqref{eq:family},
\[
 \Pr\!\left(
 \begin{array}{c}
 f_{\boldz}\text{ is a squarefree quintic},\\
 \#\Jac(C_{\boldz})(\F_p)\in\cA
 \end{array}
 \right)
 \geq
 \frac{\max(|\cA|-p,0)}
 {48000p^{3/2}(\log p)^2(\log\log p)^2}.
\]
\end{prop}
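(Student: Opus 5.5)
The plan is to reduce Proposition~\ref{prop:slice} to the order-counting theorem of Lenstra, Pila and Pomerance \cite[Theorem~1.1]{LPP02}. That theorem counts squarefree quintics $f$ over $\F_p$ with $\#\Jac(y^2=f)(\F_p)$ in a prescribed set of integers in $I_p$, and gives a lower bound of the shape $(|\cA|-p)/(p^{3/2}(\log p)^2(\log\log p)^2)$ times the total number of such quintics. I would then show that the marked slice of Definition~\ref{defn:family} sees a proportional share of this count.

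\textbf{Identify the slice.} For $p\geq5$ the map $\boldz\mapsto(t_1,\dots,t_5)$ is an affine bijection of $\F_p^5$, since $3$ is invertible. Next I would check directly that
\[
 f_{\boldz}(1/9)=3^{-10}+t_5+\sum_{i=1}^4t_i-\sum_{i=1}^5t_i=3^{-10}=(1/243)^2 .
\]
Comparing coefficients then shows that $\boldz\mapsto f_{\boldz}$ is a bijection from $\F_p^5$ onto the affine hyperplane
\[
 \mathcal H=\{f:\deg f\leq5,\ f(1/9)=3^{-10}\}.
\]
So the probability in question equals $|\mathcal H\cap\mathcal G_\cA|/p^5$, where $\mathcal G_\cA$ is the set of squarefree quintics whose Jacobian order lies in $\cA$.

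\textbf{Transfer by a group action.} Let $G$ be the group of substitutions $f(X)\mapsto c^2f(X+b)$ with $b\in\F_p$ and $c\in\F_p^\times$. These preserve squarefree quintics and the $\F_p$-isomorphism class of the curve, hence preserve $\#\Jac(\F_p)$. Given $f\in\mathcal G_\cA$, the elements $g\in G$ with $gf\in\mathcal H$ correspond to choices of $(x_0,y_0)$ with $f(x_0)=y_0^2\neq0$. Indeed, one takes $b=x_0-1/9$ and $c=\pm3^{-5}/y_0$, up to the two-to-one sign ambiguity.

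By the Weil bound for $C_f$, the number of such pairs is $p-O(\sqrt p)\geq p/2$ once $p>8100$; the explicit Weil constant $\leq 4\sqrt p+6$ suffices here. Double counting pairs $(f,g)$ with $gf\in\mathcal H$ then gives
\[
 |\mathcal H\cap\mathcal G_\cA|\cdot|G|\;\geq\;|\mathcal G_\cA|\cdot\frac p2\cdot\frac12 ,
\]
using that stabilizers are harmless because we count pairs. Since $|G|=p(p-1)$, this yields
\[
 |\mathcal H\cap\mathcal G_\cA|\gg|\mathcal G_\cA|/p .
\]
Inserting the LPP lower bound $|\mathcal G_\cA|\gg p^6(|\cA|-p)/(p^{3/2}(\log p)^2(\log\log p)^2)$ and dividing by $p^5$ gives the claim.

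\textbf{Main obstacle.} The hard step is not the transfer but making the constants explicit. LPP's theorem must be quoted in a form that counts polynomials, or else converted from isomorphism classes weighted by $1/\#\mathrm{Aut}$. The subtraction of $p$ from $|\cA|$ must match the exceptional set in their statement, and the absolute constant $48000$ has to be tracked through the Weil bound and the factors of $2$ above. I would first fix the exact polynomial-counting form of \cite[Theorem~1.1]{LPP02} with its numerical constant. I would then verify that $p>8100$ is exactly what makes the Weil-bound step give at least $p/2$ admissible points, with the two sign choices for $c$ and the degree-five condition on the leading coefficient absorbed into the final constant.
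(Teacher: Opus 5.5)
Your proposal is correct and follows essentially the paper's route. The cited \cite[Lemma~6.1]{Zhang2026} transfers the full-quintic count of \cite[Theorem~1.1]{LPP02} (arbitrary nonzero leading coefficients, at most $p$ exceptional orders) to the marked slice by a bijection of triples; this is exactly your double count of pairs $(f,g)$, namely $(f,x_0,y_0)\leftrightarrow(\boldz,b,c)$ with $b=x_0-1/9$ and $c=3^{-5}/y_0$, combined with the Weil lower bound on points with $y_0\neq0$. For the constant bookkeeping, note that this correspondence is already one-to-one, so your extra factor $\tfrac12$ for a ``sign ambiguity'' is spurious. Also, the Weil step alone ($p-4\sqrt p-5\geq p/2$) needs far less than $p>8100$, so that hypothesis is not what drives it.
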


Proposition~\ref{prop:slice} transfers the full quintic model count of
\cite[Theorem~1.1, p.~106]{LPP02}. That theorem allows arbitrary nonzero leading coefficients and at most $p$ exceptional integer orders. The transfer in \cite[Lemma~6.1]{Zhang2026} uses a bijection of triples to pass to the marked slice. It does not restrict the original count to monic quintics. We use this probability statement directly.

\subsection{A density-to-complexity theorem}

\begin{thm}[Transfer from smooth-order density]\label{thm:density-transfer}
Assume that fixed constants $c_b>0$, $c_d\geq0$ and an integer
$p_\star\geq3$ satisfy the following condition: for every prime
$p\geq p_\star$, the interval $I_p$ contains at least
\[
 p^{3/2}e^{-c_dS_p}
\]
positive integers whose prime divisors are all at most $e^{c_bS_p}$.
Then there is a constant $C>0$, independent of $k$, such that for every integer
$k\geq2$ there is a positive integer $m_k$ with
\[
 \tau(m_k k!)
 \leq
 \exp\!\left(C\sqrt{A_k\log A_k}\right),
 \qquad A_k=\log(16k^2).
\]
More precisely, let $C_0\geq1$ be the constant of
Proposition~\ref{prop:certificate}. Choose an integer $p_0>8100$, with
$p_0\geq p_\star$, such that
\[
 p^{3/2}e^{-c_dS_p}\geq2p
 \qquad\text{for every prime }p\geq p_0.
\]
Then one may take
\[
 D_0=2p_0+384002(6C_0+1)+3,\qquad
 C=c_b+c_d+6+\log D_0.
\]
\end{thm}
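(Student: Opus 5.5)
We follow Cheng's strategy with genus-two Jacobians: build one circuit whose output is divisible by every prime $p\leq k$, and multiply it by enough copies to cover the prime-power exponents of $k!$. Fix $k\geq2$ and set $A=A_k=\log(16k^2)$.

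\textbf{Choice of parameters.} Take a smoothness cutoff $B=\exp(c_b\sqrt{A\log A})$ and let
\[
 N=\prod_{q\leq B,\ q\text{ prime}} q^{e_q}, \qquad q^{e_q}\leq (k+1)^2<q^{e_q+1}.
\]
Then $\log N\leq \pi(B)\cdot 2\log(k+1)$, so $\lceil\log_2 N\rceil\leq B\cdot O(\log k)$. For a prime $p\leq k$ we have $x_p\leq p^2\leq k^2$, hence $\log x_p\leq A$ and $e^{c_bS_p}\leq B$. Every $B$-smooth integer $n$ in $I_p$ satisfies $n\leq 2p^2\leq 2k^2$, so each prime power in $n$ is at most $q^{e_q}$. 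Consequently $n\mid N$.

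\textbf{Certificates for large primes.} Take primes $p$ with $p_0\leq p\leq k$. Let $\cA$ be the set of $B$-smooth integers in $I_p$. By hypothesis and the choice of $p_0$, $|\cA|-p\geq \tfrac12 p^{3/2}e^{-c_dS_p}$. Proposition~\ref{prop:slice} then shows that a uniform $\boldz\in\F_p^5$ gives a squarefree quintic with $\#\Jac(\F_p)\in\cA$ with probability
\[
 P\geq e^{-c_dS_p}\big/\bigl(96000(\log p)^2(\log\log p)^2\bigr).
\]
For such $\boldz$ the group order divides $N$, so $[N]D_{\boldz}=0$ and, by Proposition~\ref{prop:certificate}(2), $p\mid Z(N,\boldz)$. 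We need a single choice of integer vectors that works for all primes simultaneously. Take $H=p$-independent, say $H=\lceil 2k\rceil$ or larger, so that integer vectors in $\{0,\dots,H-1\}^5$ reduce nearly uniformly mod $p$. Then choose $R$ vectors $\boldz_1,\dots,\boldz_R$ by a counting/union-bound argument. Each prime is missed by a random $R$-tuple with probability at most $(1-P/2)^R$. With $R\approx (2/P)\log(2k)\leq e^{(c_d+o(1))\sqrt{A\log A}}$ this is below $1/k$, so a good tuple exists. The product $\prod_j Z(N,\boldz_j)$ is divisible by every prime in $[p_0,k]$. This also covers primes where the quintic degenerates, by Proposition~\ref{prop:certificate}(1), and $2,3$ divide each factor.

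\textbf{Small primes and exponents.} Primes $5\leq p<p_0$ are handled by multiplying in the constant $\prod_{p<p_0}p$, which costs $O(p_0)$ gates. Since $v_p(k!)\leq k\leq 2^{\lceil\log_2 k\rceil}$, we raise the total product $Q$ to the power $2^{\lceil\log_2 k\rceil}$ by repeated squaring, costing $O(\log k)$ gates. This yields a positive multiple $m_kk!$.

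\textbf{Total cost.} The total is
\[
 R\cdot C_0(1+\log_2N+\log_2 H)+R+O(p_0+\log k)\leq \exp\bigl((c_b+c_d+O(1))\sqrt{A\log A}\bigr),
\]
with the bookkeeping of explicit constants producing $D_0$ and $C$.

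\textbf{Main obstacle.} The delicate step is the simultaneous choice of $\boldz_j$ as integer vectors. The probability bound is over $\F_p^5$, and uniformity of integer boxes modulo many $p$ must be handled. Taking $H$ a multiple of every relevant $p$ fails, so instead we take $H\geq p$ and bound the non-uniformity loss by a factor $\leq 2^5$. That factor is absorbed into $D_0$ via the term $384002(6C_0+1)$.
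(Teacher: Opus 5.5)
Your strategy is the paper's: one scalar built from the primes up to $B$, the marked-slice probability on a single $p$-independent integer grid, a union bound over the primes in $[p_0,k]$, a small-prime patch, and $\lceil\log_2k\rceil$ squarings. However, two quantitative steps fail as written.

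First, the exponent cap in $N$ is too small. From $n\le2k^2$ you cannot conclude that every prime power dividing $n$ is at most $q^{e_q}\le(k+1)^2$, since $2k^2>(k+1)^2$ for $k\ge3$. The sharper bound $n\le p^2+\tfrac12p^{3/2}$ does not rescue this either: when $k$ itself is a tested prime, $k\ge p_0>8100$ gives $k^2+\tfrac12k^{3/2}>(k+1)^2$. A $B$-smooth element of $I_k$ divisible by some $q^j>(k+1)^2$ then does not divide $N$. So $[N]D_{\boldz}=0$ is not guaranteed on the whole favorable set $\cA$. The paper caps at $16k^2$, i.e.\ $e_q=\lfloor A_k/\log q\rfloor$. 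This is where $A_k=\log(16k^2)$ comes from, and it still gives $\log N_k\le A_kB$.

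Second, your sampler loss does not support the explicit constant in the statement. With $H=\lceil2k\rceil$, the available bound is that each residue has mass at least $\lfloor H/p\rfloor/H$. The ratio $p\lfloor H/p\rfloor/H$ is only about $2/3$ for primes $p$ slightly above $2k/3$, so five coordinates give a factor near $(2/3)^5<1/2$. Hence the miss probability $(1-P/2)^R$ is not justified.

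Your fallback (any $H\ge p$, loss at most $2^5$) is valid but is not absorbed by $384002(6C_0+1)$. In the paper $384002=2\cdot192001$, with the factor $2$ coming from $B\le2e^{c_bS_k}$. Also $192000=2\cdot96000$ records a sampler loss of exactly a factor $2$. A loss of $2^5$ would inflate that coefficient roughly sixteenfold, so you would obtain some constant $C$ but not the stated $D_0$ and $C$.

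The paper takes $H=10k$, so every residue has mass at least $9/(10p)$ and $(9/10)^5>1/2$. Together with $S_p\le S_k$ and $(\log p)^2(\log\log p)^2\le A_k^4$, this gives the $p$-uniform rate $\delta_k=e^{-c_dS_k}/(192000A_k^4)$, hence $J_k\le192001A_k^5e^{c_dS_k}$. Combining this with $1+\lceil\log_2N_k\rceil+\lceil\log_2(10k+1)\rceil\le6A_kB$, $r_k\le3A_k$ and $\log A_k\le S_k$ yields exactly $D_0A_k^6e^{(c_b+c_d)S_k}\le\exp(CS_k)$. Your sentence about ``bookkeeping'' must be replaced by this computation. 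With these two corrections your argument coincides with the paper's.
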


\begin{proof}
The proof applies Propositions~\ref{prop:certificate} and~\ref{prop:slice} to one common finite sampling space. We include the sampling losses, common scalar, prime valuations and all gate costs.

\medskip
\noindent\textbf{Step 1. Fixed constants and the sampler.}
We fix every threshold before choosing $k$.

For $p\geq3$,
\[
 1<\log x_p\leq2\log p,\qquad
 S_p\leq\sqrt{2\log p\log(2\log p)}.
\]
Consequently $S_p=o(\log p)$, and
$\tfrac12\log p-c_dS_p\to\infty$. This proves that one integer $p_0$ with the stated properties exists.

Fix $k\geq p_0$, and put
\[
 A_k=\log(16k^2),\qquad S_k=\sqrt{A_k\log A_k}.
\]
Choose
\begin{equation}\label{eq:sampling-parameters}
 B=\lceil e^{c_bS_k}\rceil,\qquad
 H=10k,\qquad
 \delta_k=\frac{e^{-c_dS_k}}{192000A_k^4},\qquad
 J_k=\left\lceil\frac{\log k+1}{\delta_k}\right\rceil.
\end{equation}
Each sample consists of five independent uniform integer coordinates in
$\{0,\ldots,H-1\}$. These formulas specify the sampling parameters; the circuit costs are accounted for below.

\medskip
\noindent\textbf{Step 2. Density on the actual grid.}
We retain the probability bound after deleting every exceptional order.

Let $p_0\leq p\leq k$ be prime, and let $\cT_p$ be the set of $B$-smooth positive integers in $I_p$. Since
$\log x_p\leq2\log k<A_k$, monotonicity of $\sqrt{v\log v}$ for $v>1$ gives
$S_p\leq S_k$. The density hypothesis therefore implies
\[
 |\cT_p|\geq p^{3/2}e^{-c_dS_p}.
\]
The choice of $p_0$ gives
\begin{equation}\label{eq:delete-orders}
 |\cT_p|-p
 \geq\frac12p^{3/2}e^{-c_dS_p}
 \geq\frac12p^{3/2}e^{-c_dS_k}.
\end{equation}
Thus all of the at most $p$ exceptional orders in the model count may be deleted.

Proposition~\ref{prop:slice} now gives, for uniform residues modulo $p$, probability at least
\[
 \frac{e^{-c_dS_k}}
 {96000(\log p)^2(\log\log p)^2}
 \geq
 \frac{e^{-c_dS_k}}{96000A_k^4}
\]
that $f_{\boldz}$ is a squarefree quintic and its Jacobian order belongs to $\cT_p$. The last inequality uses
$0<\log\log p\leq A_k$ and $\log p\leq A_k$.

For one coordinate of the integer sampler, every residue modulo $p$ has mass at least
\[
 \frac{\lfloor H/p\rfloor}{H}
 \geq\frac{1-p/H}{p}
 \geq\frac9{10p}.
\]
Coordinate independence gives every residue vector at least $(9/10)^5$ times its uniform mass. Since
\[
 \left(\frac9{10}\right)^5=\frac{59049}{100000}>\frac12,
\]
the actual integer sampler has the favorable event with probability at least $\delta_k$. The grid and all its parameters are the same for every tested prime.

\medskip
\noindent\textbf{Step 3. One common scalar.}
We make every favorable smooth order divide a single positive integer.

Define
\begin{equation}\label{eq:common-scalar}
 N_k=
 \prod_{\substack{\ell\leq B\\\ell\ {\rm prime}}}
 \ell^{\lfloor A_k/\log\ell\rfloor}.
\end{equation}
The trivial bound on the number of primes gives
\begin{equation}\label{eq:scalar-size}
 \log N_k
 \leq A_k\#\{\ell\leq B\mid\ell\text{ prime}\}
 \leq A_kB.
\end{equation}
Every $B$-smooth positive integer $n\leq16k^2$ divides $N_k$. Indeed, if
$\ell^j\mid n$, then
$j\log\ell\leq\log n\leq A_k$, so
$j\leq\lfloor A_k/\log\ell\rfloor$.

On the favorable event of Step~2, the Jacobian order $n$ is in $I_p$, whose upper endpoint satisfies
\[
 p^2+\frac12p^{3/2}<2p^2\leq2k^2<16k^2.
\]
Hence $n\mid N_k$. The order of an element of a finite group divides the group cardinality, so
\[
 [N_k](D_{\boldz}\bmod p)=0.
\]
Proposition~\ref{prop:certificate} consequently supplies
\[
 p\mid Z(N_k,\boldz).
\]

For each of the finitely many literal vectors in the common grid, fix one certificate circuit from its rational data $N_k,\boldz$. Make this choice once, independently of all tested primes. Its length is at most
\[
 C_0\left(
 1+\lceil\log_2N_k\rceil+\lceil\log_2(H+1)\rceil
 \right).
\]
The scalar's binary digits choose a schedule of at most
$2\lfloor\log_2N_k\rfloor$ group additions, whose scalar expansion is included in this charge by \cite[Proposition~5.1]{Zhang2026}. The five literals and every retained branch-test inverse are included as well. Definition~\eqref{eq:common-scalar} supplies no free prime list, logarithm, exponentiation or register containing $N_k$. Its scheduling cost is bounded through \eqref{eq:scalar-size}.

It follows that, for each tested prime,
\begin{equation}\label{eq:prime-success}
 \Pr_{\boldz}\bigl(p\mid Z(N_k,\boldz)\bigr)\geq\delta_k.
\end{equation}

\medskip
\noindent\textbf{Step 4. Simultaneous prime coverage.}
A finite union bound chooses one successful deterministic list of samples.

Draw $J_k$ independent samples from the same grid. For a fixed tested prime, \eqref{eq:prime-success} bounds the probability that all certificate outputs miss it by
\[
 (1-\delta_k)^{J_k}
 \leq e^{-\delta_kJ_k}
 \leq\frac{e^{-1}}k.
\]
There are fewer than $k$ tested primes. Summing these failure probabilities shows that the probability of missing at least one tested prime is less than $e^{-1}<1$. Thus some deterministic list
\[
 \boldz^{(1)},\ldots,\boldz^{(J_k)}
\]
covers every prime in $[p_0,k]$. This uses independence among samples only.

Construct $p_0!$ directly, using an incrementing counter and a product accumulator, in at most $2p_0$ gates. Concatenate the selected certificate circuits and multiply each positive output into the accumulator. The resulting positive integer
\[
 Q_k=p_0!\prod_{j=1}^{J_k}Z(N_k,\boldz^{(j)})
\]
is divisible by every prime at most $k$. Its circuit length is at most
\begin{equation}\label{eq:coverage-cost}
 2p_0+
 J_k\left[
 C_0\left(
 1+\lceil\log_2N_k\rceil+\lceil\log_2(10k+1)\rceil
 \right)+1
 \right].
\end{equation}
The added $1$ inside the brackets charges each multiplication into the accumulator.

\medskip
\noindent\textbf{Step 5. Factorial valuations.}
We apply the prime-support amplification of
\cite[Lemma~3.1]{Zhang2026} to $Q_k$.

Put $r_k=\lceil\log_2k\rceil$ and append exactly $r_k$ squaring gates. For each prime $p\leq k$,
\[
 v_p(k!)
 =
 \sum_{j\geq1}\left\lfloor\frac{k}{p^j}\right\rfloor
 <
 \frac{k}{p-1}
 \leq k
 \leq2^{r_k}.
\]
Since $p\mid Q_k$,
$v_p(Q_k^{2^{r_k}})\geq2^{r_k}$. For $p>k$, the factorial valuation is zero. Unique factorization gives
\[
 Q_k^{2^{r_k}}=m_k k!
\]
for a positive integer $m_k$. This quotient exists in the proof; it is not computed by a division gate.

\medskip
\noindent\textbf{Step 6. A common explicit constant.}
We bound the full length, including the finite patch and appended squarings.

For $k\geq2$,
\[
 A_k\geq\log64>4,\qquad
 S_k\geq1,\qquad
 \log A_k\leq S_k,\qquad
 \log(10k+1)\leq A_k.
\]
Using \eqref{eq:scalar-size},
\begin{align*}
 1+\lceil\log_2N_k\rceil+\lceil\log_2(10k+1)\rceil
 &\leq3+\frac{A_kB+A_k}{\log2}\\
 &\leq6A_kB.
\end{align*}
For the last inequality, use $B\geq1$, $A_k>4$, and
$3+2A_kB/\log2\leq6A_kB$. Also,
\[
 J_k
 \leq1+192000A_k^5e^{c_dS_k}
 \leq192001A_k^5e^{c_dS_k},
\]
because $\log k+1\leq A_k$ and $A_k^5e^{c_dS_k}\geq1$. Finally,
\[
 B\leq2e^{c_bS_k},
 \qquad
 r_k\leq1+\frac{A_k}{\log2}\leq3A_k.
\]
Adding the squarings to \eqref{eq:coverage-cost}, the full length is at most
\begin{align*}
 &2p_0+
 384002(6C_0+1)A_k^6e^{(c_b+c_d)S_k}
 +3A_k\\
 &\hspace{2cm}\leq
 D_0A_k^6e^{(c_b+c_d)S_k}.
\end{align*}
Taking logarithms and using $\log A_k\leq S_k$ and $S_k\geq1$ gives
\[
 D_0A_k^6e^{(c_b+c_d)S_k}
 \leq
 \exp\!\left((c_b+c_d+6+\log D_0)S_k\right).
\]

For $2\leq k<p_0$, choose $m_k=1$ and construct $k!$ directly in at most
$2k\leq2p_0\leq D_0$ gates. The same bound covers these cases because
$C\geq\log D_0$ and $S_k\geq1$. Thus one constant handles every $k$, all small primes and all finite density exceptions.
\end{proof}

\subsection{Proof of the arithmetic bound}

\begin{proof}[Proof of Theorem~\ref{thm:main}]
Fix $\delta$ as in the hypothesis of Theorem~\ref{thm:main}.
Theorem~\ref{thm:strip} applies with this same $\delta$ and gives a threshold
$X_\delta$, depending only on $\delta$, such that, for every real $x\geq X_\delta$ and every
$x^{3/4}\leq h\leq x$,
\begin{equation}\label{eq:applied-density}
 \Psi(x+h,e^{S_x})-\Psi(x,e^{S_x})
 \geq\frac h4e^{-3S_x},
 \qquad
 S_x=\sqrt{\log x\log\log x}.
\end{equation}
This statement holds at every such center.

For a prime $p\geq5$, take
\[
 x=x_p=p^2-\frac12p^{3/2},
 \qquad h=h_p=p^{3/2}.
\]
Since $x_p\leq p^2$,
$x_p^{3/4}\leq h_p$. Also,
$h_p\leq x_p$ is equivalent to $\sqrt p\geq3/2$, which holds here. Finally,
$x_p\geq p^2/2\to\infty$.

Choose one integer $p_\star\geq5$, depending only on $\delta$, such that, for every prime
$p\geq p_\star$,
\[
 x_p\geq X_\delta,\qquad
 \log x_p>1,\qquad S_p\geq\log4.
\]
Applying \eqref{eq:applied-density}, and using
$(x_p,x_p+h_p]\subseteq I_p$, gives
\begin{align*}
 \#\{n\in I_p\cap\N\mid n\text{ is }e^{S_p}\text{-smooth}\}
 &\geq\frac{h_p}{4}e^{-3S_p}\\
 &\geq h_pe^{-4S_p}.
\end{align*}
The last inequality is exactly $e^{S_p}\geq4$. This is the hypothesis of
Theorem~\ref{thm:density-transfer} with
\[
 c_b=1,\qquad c_d=4.
\]
The parameters $c_b=1$ and $c_d=4$ are absolute, while $p_\star$ may depend
on $\delta$; all are fixed before $k$ varies.

Choose the integer $p_0>8100$ used in that theorem, depending only on
$\delta$, and let $C_0\geq1$ be the absolute certificate constant. The transfer theorem gives an intermediate bound with
\[
 C_1
 =
 11+\log\!\left(2p_0+384002(6C_0+1)+3\right).
\]
Thus, for every $k\geq2$, there is a positive integer $m_k$ with
\[
 \tau(m_k k!)
 \leq
 \exp\!\left(C_1\sqrt{A_k\log A_k}\right),
 \qquad A_k=\log(16k^2).
\]
The existence of $p_0$ follows from $S_p=o(\log p)$, so it does not vary with $k$.
Consequently, $C_1$ depends only on $\delta$.

This composition uses one grid $H=10k$ and one scalar $N_k$, with
\[
 B=\left\lceil\exp\!\left(\sqrt{A_k\log A_k}\right)\right\rceil,
 \qquad \log N_k\leq A_kB.
\]
The proof of Theorem~\ref{thm:density-transfer} charges every certificate, literal construction, product multiplication and scalar-schedule instruction. It also charges the small-prime patch $p_0!$ and the
$\lceil\log_2k\rceil$ squarings supplying factorial valuations. The quotient defining $m_k$ is positive and requires no circuit division.

To obtain the stated bound, put $L=\log k$ and take $k\geq64$. Then
\[
 A_k=2L+\log16\leq3L.
\]
Also $L\geq\log64>3$, so $\log L\geq\log3$ and
\[
 \log A_k\leq\log(3L)\leq2\log L.
\]
Consequently,
\[
 \sqrt{A_k\log A_k}
 \leq\sqrt{6L\log L}
 \leq3\sqrt{L\log L}.
\]
Taking $C=3C_1$ and $k_0=64$ proves Theorem~\ref{thm:main} under the
same fixed-half-plane hypothesis as Theorem~\ref{thm:strip}.
\end{proof}

\section*{Acknowledgements}

The author is grateful to Xiaohua (Michael) Xuan for his support throughout
this work.

\end{document}